\documentclass{amsart}
\usepackage{graphicx}
\usepackage{amsfonts}
\usepackage{amsmath}
\usepackage{amssymb}
\usepackage{amsthm,color}
\usepackage{epsfig,latexsym}

\newtheorem{Thm}{Theorem}

\newtheorem{Lem}{Lemma}
\newtheorem{Prop}{Proposition}

\newtheorem{Def}{Definition}

\newtheorem{Exa}{Example}

\newcommand{\C}{\mathbb{C}}
\newcommand{\Z}{\mathbb{Z}}
\renewcommand{\O}{{\mathcal O}}
\def\mapright#1{\smash{ \mathop{\longrightarrow}\limits^{#1}}}

\begin{document}

\author{Michelle Snider}
\title[Multiplicity-Free Richardson Subvarieties]{A Combinatorial Approach to Multiplicity-Free Richardson Subvarieties of the Grassmannian}

\email{msnider@math.ucsd.edu}
\date{\today}

\maketitle

\begin{abstract}
We consider Buch's rule for K-theory of the Grassmannian, in the Schur multiplicity-free cases classified by Stembridge. Using a result of Knutson, one sees that Buch's coefficients are related to M\"obius inversion.
We give a direct combinatorial proof of this by considering the product expansion for Grassmannian Grothendieck
polynomials. We end with an extension to the multiplicity-free cases of Thomas and Yong.
\end{abstract}

\tableofcontents

\section{Motivation}
 \subsection{Schubert and Richardson varieties}

We consider the \textbf{Grassmannian}  $Gr_k \C^n:=\{V \leq \C^n \; | \; dim(V)=k\}$. For a partition $\lambda$ contained in a $k \times (n-k)$ box,
consider the path from the northeast corner to the southwest corner of the box that traces the partition. For the standard flag ($C_i = (\ast_1, \ldots, \ast_i, 0, \ldots 0)$), we define the \textbf{Schubert variety} as $$X_\lambda = \{ V \in Gr_k \C^n \;| \; dim (V \cap C_i) \geq \#(\text{ south steps in the first}  \; i \text{ steps of the path })\}.$$


 We denote the \textbf{Schubert class} in cohomology as $S_\lambda := [X_\lambda]_H \in H^\star(Gr_k \C^n)$. The set $$\{S_\lambda \; | \; \lambda \subset k\times (n-k) \text{ box} \}$$ forms a basis for $H^\star(Gr_k \C^n)$, where $$S_\lambda S_\mu = \sum c_{\lambda \mu}^{\nu} S_\nu$$ for $|\nu|=|\lambda|+|\mu|$, and $c_{\lambda \mu}^{\nu}$ the Littlewood-Richardson coefficients. This follows from the surjective homomorphism $$\{ \text{ring of symmetric polynomials} \} \twoheadrightarrow  \{H^\star(Gr_k \C ^n)\}$$

\begin{center} $s_{\lambda} \longmapsto $
$\left\{
  \begin{array}{ll}
    S_{\lambda}, & \hbox{if $\lambda$ fits in $k\times (n-k)$ box;} \\
    0, & \hbox{otherwise.}
  \end{array}
\right.$
\end{center}
for Schur functions $s_\lambda$.

 The \textbf{M\"obius function $\mu(\nu)$} is defined recursively on a poset ${\mathcal P}$ as the unique function satisfying
 \begin{displaymath} \sum_{\alpha \geq_{\mathcal P} \nu} \mu_{\mathcal P}(\alpha)=1 \end{displaymath}
The connection of this definition to K-classes is shown in \cite{K}. Since we are primarily interested in working in K-theory, we will use $[A]$ to denote the K-class of a subscheme of $A$, and $[A]_H$ to denote its homology class.

Any subvariety $X$ of a flag manifold is rationally equivalent to a linear combination of Schubert cycles with uniquely determined non-negative integer coefficients \cite{Brion}. We say $X$ is \textbf{multiplicity-free} if these coefficients are 0 or 1.

\begin{Thm}\cite{K} \label{thm:mobiusbrion}
  Let $X$ be a multiplicity-free irreducible subvariety of $G/P$, in the sense of \cite{Brion}, with $[X]_H = \sum_{d\in D} [X_d]_H$. Let $\mathcal P \subseteq W/W_P$ be the set of Schubert varieties contained in $\cup_{d\in D} X_d$ (an order ideal in the Bruhat order on $W/W_P$). Then as an element of $K(G/P)$, $$ [X] = \sum_{X_e \subseteq \bigcup_{d\in D} X_d} \mu_{\mathcal P}(X_e)\ [X_e]. $$
\end{Thm}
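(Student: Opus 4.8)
The plan is to first reduce to the case where $X$ is literally the reduced union $Y := \bigcup_{d\in D}X_d$ of its Schubert components, then to compute $[\mathcal{O}_Y]$ in the basis $\{[\mathcal{O}_{X_e}]\}$ of $K(G/P)$ by pairing against opposite Schubert classes; Möbius inversion appears when we solve the resulting triangular linear system.

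\emph{Step 1 (reduction to a union of Schubert varieties).} By \cite{Brion}, a multiplicity-free subvariety $X$ of $G/P$ satisfies $[\mathcal{O}_X] = [\mathcal{O}_Y]$ in $K(G/P)$, where $Y$ is the reduced union of the Schubert varieties occurring in the cohomology class of $X$; for instance, $X$ degenerates flatly inside $G/P$ to $Y$, and $Y$ is reduced and Cohen--Macaulay, so the class is preserved. Note that the Schubert varieties contained in $Y$ are exactly the elements of $\mathcal P$, with $D$ the set of maximal elements. It therefore suffices to expand $[\mathcal{O}_Y] = \sum_{X_e\in\mathcal P}c_e\,[\mathcal{O}_{X_e}]$ and identify $c_e = \mu_{\mathcal P}(X_e)$.

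\emph{Step 2 (a unitriangular pairing).} Write $\chi(\mathcal F) = \sum_i(-1)^i\dim_{\C}H^i(\mathcal F)$ for the sheaf Euler characteristic. For $v\in W/W_P$ let $X^v$ be the opposite Schubert variety and $e_v$ the $T$-fixed point it contains. By Frobenius splitting the scheme intersection $X_w\cap X^v$ is the reduced, Cohen--Macaulay, rational Richardson variety, nonempty exactly when $v\le w$, and $\mathrm{Tor}^{\mathcal{O}_{G/P}}_{>0}(\mathcal{O}_{X_w},\mathcal{O}_{X^v}) = 0$, so $[\mathcal{O}_{X_w}]\cdot[\mathcal{O}_{X^v}] = [\mathcal{O}_{X_w\cap X^v}]$ and hence $\chi\bigl([\mathcal{O}_{X_w}]\cdot[\mathcal{O}_{X^v}]\bigr) = 1$ if $v\le w$ and $0$ otherwise. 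More generally $Y\cap X^v = \bigcup_{d\in D}(X_d\cap X^v)$ is a reduced, Frobenius-split union of Richardson varieties; when it is nonempty every nonempty piece contains $e_v$, so the union is connected, and a connected union of Frobenius-split rational subvarieties has $H^{>0}(\mathcal{O}) = 0$ and $\chi(\mathcal{O}) = 1$. Thus $\chi\bigl([\mathcal{O}_Y]\cdot[\mathcal{O}_{X^v}]\bigr)$ equals $1$ if $X_v\in\mathcal P$ and $0$ otherwise.

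\emph{Step 3 (Möbius inversion).} Pairing the expansion $[\mathcal{O}_Y] = \sum_e c_e[\mathcal{O}_{X_e}]$ against $[\mathcal{O}_{X^v}]$ and applying Step 2 gives, for every $v\in W/W_P$,
\begin{displaymath}
\sum_{e\,:\,e\,\geq\,v} c_e \;=\;
\begin{cases}
1 & \text{if } X_v\in\mathcal P,\\
0 & \text{if } X_v\notin\mathcal P.
\end{cases}
\end{displaymath}
With respect to a linear extension of Bruhat order the matrix $\bigl([v\le e]\bigr)_{v,e}$ is unitriangular, so this system has a unique solution. The assignment $c_e = \mu_{\mathcal P}(X_e)$ for $X_e\in\mathcal P$ and $c_e = 0$ otherwise solves it: when $X_v\in\mathcal P$ the equation is exactly the recursion $\sum_{\alpha\geq_{\mathcal P}X_v}\mu_{\mathcal P}(\alpha) = 1$ defining the Möbius function; when $X_v\notin\mathcal P$, downward closure of the order ideal $\mathcal P$ forces the index set to be empty, so both sides vanish. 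Hence $c_e = \mu_{\mathcal P}(X_e)$, and with Step 1 this yields $[X] = \sum_{X_e\subseteq\bigcup_{d\in D}X_d}\mu_{\mathcal P}(X_e)\,[X_e]$.

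The main obstacle is that Steps 1 and 2 import several nontrivial geometric inputs — the flat degeneration of a multiplicity-free variety to a reduced union of Schubert varieties, the reducedness of that union and of the intersections $X_w\cap X^v$, the vanishing of higher $\mathrm{Tor}$ making $[\mathcal{O}_{X_w}]\cdot[\mathcal{O}_{X^v}] = [\mathcal{O}_{X_w\cap X^v}]$, and $H^{>0}(\mathcal{O}) = 0$ for connected unions of Richardson varieties — all of which rest ultimately on Frobenius splitting. Granting these, Steps 2--3 are purely formal. An alternative to the pairing is to expand $[\mathcal{O}_Y]$ by iterating the Mayer--Vietoris sequences $0\to\mathcal{O}_{A\cup B}\to\mathcal{O}_A\oplus\mathcal{O}_B\to\mathcal{O}_{A\cap B}\to 0$ over the components $X_d$ and matching the resulting alternating sum against Möbius inversion on $\mathcal P$, but the pairing localizes the combinatorics more cleanly.
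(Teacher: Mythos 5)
Your argument is correct in outline, but note that the paper does not actually prove this statement: Theorem \ref{thm:mobiusbrion} is imported wholesale from \cite{K} (whose title, ``Frobenius splitting and M\"obius inversion,'' is precisely the strategy you have reconstructed), and the paper's own contribution is an \emph{independent combinatorial} proof of the special case where $X$ is a Richardson variety in a Grassmannian, carried out via Buch's set-valued-tableau rule and sign-reversing involutions on posets of fillings, with no geometry at all. So your route is genuinely different from everything the paper actually does. Its ingredients are sound: Brion's flat degeneration of a multiplicity-free irreducible subvariety to the reduced Cohen--Macaulay union $Y$ of the Schubert varieties in its cohomology class gives $[X]=[\mathcal{O}_Y]$; the Euler-characteristic pairing against opposite Schubert classes is unitriangular in Bruhat order; and your Step 3 correctly matches the paper's (nonstandard but explicitly stated) normalization $\sum_{\alpha\geq_{\mathcal P}\nu}\mu_{\mathcal P}(\alpha)=1$, with the order-ideal property killing the equations at $v\notin\mathcal P$. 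What your approach buys is full generality (any $G/P$, any multiplicity-free irreducible subvariety); what the paper's involutions buy is a self-contained bijective explanation of the signs in Buch's rule, with no reliance on characteristic-$p$ methods. Two points in your Step 2 deserve more care before this could be called complete: (i) the identity $[\mathcal{O}_Y]\cdot[\mathcal{O}_{X^v}]=[\mathcal{O}_{Y\cap X^v}]$ requires Tor vanishing for the \emph{reducible} scheme $Y$ against $X^v$, which does not follow formally from the component-wise statement you quote (one needs a general-position/homotopy-invariance argument or a homological Kleiman--Bertini theorem); and (ii) the vanishing $H^{>0}(Y\cap X^v,\mathcal{O})=0$ for the connected union of Richardson varieties is exactly where compatible splitting of the whole configuration is used. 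Both are standard, and both are precisely the content of \cite{K}; granting them, your Steps 2--3 are formal and correct.
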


We will give an independent combinatorial proof of this fact in the case that $X$ is a multiplicity-free \textbf{Richardson variety} in a Grassmannian, the intersection of a Schubert variety $X_\lambda$ with an opposite Schubert variety $w_0 \cdot X_\mu$, for $w_0$ the longest word. For any $X_\lambda \subset Gr_k \C^n $, let $G_\lambda:=[X_\lambda]$. We have that $\{G_\lambda \; | \; \lambda \subset k \times (n-k) \text{ box}\}$ form a basis for $K(Gr_k \C^n)$. For certain symmetric polynomials $g_\lambda$ which we will define in the next section, we have a surjective homomorphism \cite{Buch}:
$$\{ \text{ring of symmetric functions} \} \twoheadrightarrow \{ K(Gr_k \C^n) \}$$
\begin{center} $g_{\lambda} \longmapsto $ $\left\{
  \begin{array}{ll}
    G_{\lambda}, & \hbox{if $\lambda$ fits in $(k \times n-k)$ box;} \\
    0, & \hbox{otherwise.}
  \end{array}
\right.$
\end{center}
Our main theorem will show that, for a poset ${\mathcal P}$ that we will define,  $$G_\lambda \cdot G_\mu = \sum_{\nu} \mu_{\mathcal P}(G_\nu)\ G_\nu $$ where the sum is over $\nu$ such that $\nu \subseteq (k \times n-k)$ box and $|\nu| \geq |\lambda|+|\mu|$. Our proof will proceed with sign-reversing involutions on this poset, and many reductions in the sizes of the partitions in the product.

\subsection{Grothendieck Polynomials}
For finite non-empty sets in $\Z^+$, $a$ and $b$, we say $a<b$ if $\text{max}(a) < \text{min}(b)$, and $a \leq b$ if $\text{max}(a) \leq \text{min}(b)$. For a partition $\lambda$, Buch defined a \textbf{set-valued tableau (SVT)} as a filling of a Young diagram with nonempty sets in $\Z^+$ \cite{Buch}. If each box has a single entry, it is a \textbf{Young tableau}. A tableau is a \textbf{semistandard tableau (SS)} if it is weakly increasing across rows and strictly increasing down columns. The \textbf{superstandard} filling of a tableau is the one in which each box $(i,j)$ has a single entry, $i$ (its row). In all of our examples, we will use numbers smaller than 10, so we can avoid the use of set notation: we use 45 to denote the set $\{4,5\}$.\\

Recall the combinatorial definition for the Schur polynomials, $$ s_\lambda=\sum_{T \in SSYT(\lambda)} x^T .$$ We consider the \textbf{Grothendieck polynomials} of Lascoux and Sch\"utzenberger \cite{LS}. For $\lambda$ a partition, Buch \cite{Buch} gives the formula $$g_\lambda=\sum_{T \in SS-SVT(\lambda)} (-1)^{|T|-|\lambda|} x^T$$ where $$|T|=\sum_{i,j} |T(i,j)|.$$ He proves that this is a special case of the Lascoux-Sch\"utzenberger formula (which we will not need) for $g_\pi$ in the case when $\pi$ is a Grassmannian permutation.

\begin{center} \begin{figure}[htbp]
\epsfig{file=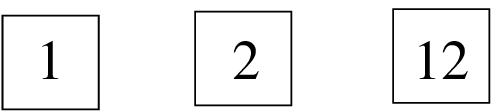, scale=0.4} \caption{In two variables, $g_{\Box}=x_1+x_2-x_1x_2$.}\label{gbox}
\end{figure} \end{center}

These are the $\{ g_\lambda\}$ representing the $G_\lambda$ in the last section. As with the Schur polynomials, it is not obvious from the combinatorial definition that these polynomials are in fact symmetric and a basis for the symmetric polynomials \cite{Buch}. Linear independence follows from the fact that the lowest homogeneous component of $g_\lambda$ is $s_\lambda$.\\

We define the \textbf{word of a tableau $w(T)$} to be the entries read right to left, top to bottom. Note that entries in a set are listed in increasing order, so that they occur in decreasing order in the word. A word is called a \textbf{reverse lattice word (RLW)} if for any initial string, $$multiplicity(i) \geq multiplicity(i+1) \; \forall \; i \geq 1$$ A word that satisfies this condition is sometimes called an \textit{election word}. For tableaux of shape $\lambda$ and $\mu$, we define the shape $\lambda  \times  \mu$ as the skew tableau formed by placing $\mu$ directly southwest of $\lambda$. When we refer to a filling of the shape $\lambda  \times  \mu$, we will call $\lambda$ the ``northeast'' partition, and $\mu$ the ``southwest'' partition.

 Buch \cite{Buch} gives a combinatorial rule for the product of two Grothendieck polynomials: $$g_\lambda g_\mu = \sum {c'}_{\lambda \mu}^{\nu} g_\nu$$
where the coefficients are given by $${c'}_{\lambda \mu}^{\nu} = (-1)^{|\nu|-|\lambda|-|\mu|}\#(T)$$ for SS-SVT $T$ of shape $\lambda  \times  \mu$, content $\nu$, with $w(T)$ a RLW. We call these the K-theoretic Littlewood-Richardson numbers, since if $|\nu| = |\lambda|+|\mu|$, then
${c'}_{\lambda \mu}^{\nu} = c_{\lambda \mu}^{\nu}$, the usual Littlewood-Richardson number.

 \begin{center} \begin{figure}[htbp]
\epsfig{file=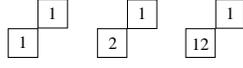, scale=0.4} \caption{$g_1  g_1=g_2+g_{11}-g_{21}$}\label{G1G1}
\end{figure} \end{center}

First, we note that the reverse lattice word condition requires that the filling of the northeast tableau $\lambda$ always be superstandard. We will construct a poset out of all of the possible fillings of the southwest tableau $\mu$, where each vertex is labeled with all tableaux of a given content, and for vertices $\nu,\nu'$, $\nu \leq_{\mathcal P} \nu'$ if $content(\nu) \supset content(\nu')$. Note that for each tableau, the row in the poset corresponds to the number of ``extra" elements in the filling (e.g. the top row has only semistandard Young tableaux). For example, consider the product $G_{2,1} \cdot G_{2,2}$ and its poset in Figure \ref{G21G22}. Note that the product is H-multiplicity-free, but not K-multiplicity-free. The latter cases are extremely rare, occurring only when both partitions $\lambda$ and $\mu$ are rectangles or one of them is a single box or empty (\cite[Proposition 7.2]{Buch}).

\begin{figure}[htbp]
\epsfig{file=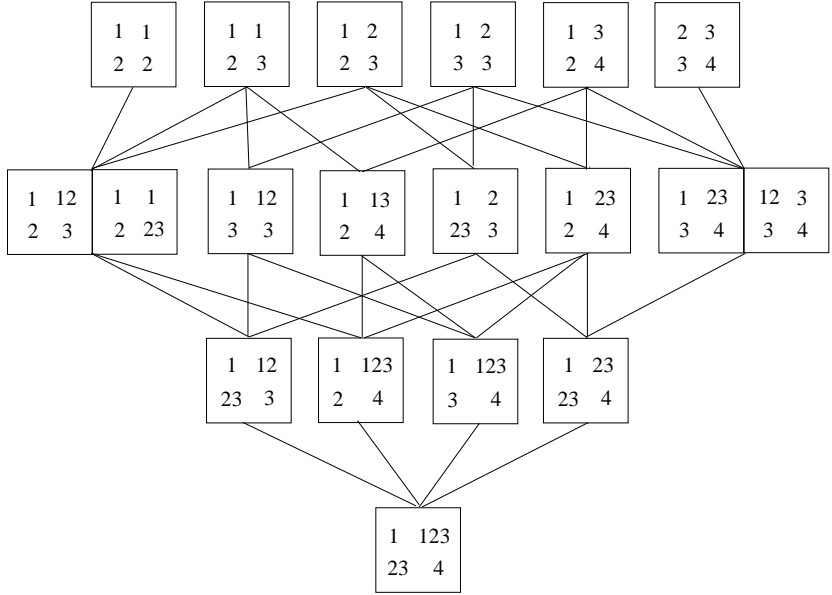, scale=0.75} \caption{The poset corresponding to $G_{2,1} \cdot G_{2,2}$: the product satisfies Stembridge cases (3) and (4) from Theorem \ref{stembridge} below.}\label{G21G22}
\end{figure}

We will consider our products as being inside an ambient box of size $k \times (n-k)$. That is, we limit the terms in the expansion to those indexed by partitions that fit inside this box. We note that this restriction gives us a sub-poset of the full poset. The M\"obius function on the remaining terms is unaffected by the removal of vertices with content exceeding the box size, since all terms above a vertex $\nu$ have content contained in the content of $\nu$. That is, for a given vertex $\nu$ with content in the ambient box, no vertex in its upwards order ideal will have content exceeding the ambient box.
We are interested in cases in which the terms in the Grothendieck expansion which correspond to the Schur expansion are multiplicity-free, i.e. that their coefficients are 0 or 1.

Then Theorem \ref{thm:mobiusbrion} implies the following:
\begin{Thm}\label{mainthm}  Consider partitions $\lambda=(\lambda_1^{\beta_1},\ldots,\lambda_l^{\beta_l})$ and $\mu=(\mu_1^{\alpha_1},\ldots,\mu_m^{\alpha_m})$ such that $G_{\lambda} \cdot G_{\mu}$ in a $k \times (n-k)$ box is a Schur-multiplicity-free product. In the corresponding poset, for each vertex $\nu'$, $\mu(\nu')$ gives the
coefficient of $G_{\nu}$ in the Buch expansion of the product, where $\nu=\nu' \bigcup (1^{\lambda_1},\ldots,l^{\lambda_l})$. \end{Thm}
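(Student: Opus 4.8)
The plan is to deduce Theorem \ref{mainthm} directly from Theorem \ref{thm:mobiusbrion} together with Buch's combinatorial product rule, once we have matched up the two posets. First I would set up the identification between the poset $\mathcal P$ of Theorem \ref{mainthm} — whose vertices are contents $\nu'$ of semistandard set-valued fillings of the southwest shape $\mu$ appearing in Buch's rule, ordered by reverse containment of content — and the poset of Theorem \ref{thm:mobiusbrion} — the order ideal of Schubert classes $X_e$ contained in the union $\bigcup_{d \in D} X_d$. The key observation, already noted in the excerpt, is that the reverse-lattice-word condition forces the northeast shape $\lambda$ to be filled superstandardly, so every tableau $T$ of shape $\lambda \times \mu$ contributing to $g_\lambda g_\mu$ is determined by its southwest filling, and its content is $\nu = \nu' \cup (1^{\lambda_1},\ldots,l^{\lambda_l})$. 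Thus the map $\nu' \mapsto \nu$ is a bijection from vertices of $\mathcal P$ onto the index set of the Buch expansion, and I would check it is an order isomorphism onto the relevant order ideal in Bruhat order: containment of partitions $\nu \subseteq \rho$ (both fitting in the box) corresponds to the Bruhat order on $W/W_P$, and ``$\nu'$ has larger content'' translates to ``$\nu$ is smaller,'' matching the direction of the M\"obius recursion $\sum_{\alpha \ge \nu} \mu_{\mathcal P}(\alpha) = 1$.

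Next I would invoke the hypothesis that $G_\lambda \cdot G_\mu$ is Schur-multiplicity-free: by definition this says the cohomology product $S_\lambda S_\mu = \sum_{|\nu| = |\lambda| + |\mu|} c_{\lambda\mu}^\nu S_\nu$ has all $c_{\lambda\mu}^\nu \in \{0,1\}$. Geometrically, $X_\lambda \cap w_0 X_\mu$ is then a multiplicity-free irreducible subvariety of $Gr_k\C^n$ in Brion's sense, with $[X_\lambda \cap w_0 X_\mu]_H = \sum_{d \in D}[X_d]_H$ where $D = \{\nu : |\nu| = |\lambda|+|\mu|,\ c_{\lambda\mu}^\nu = 1\}$ (using that the lowest-degree part of $g_\lambda g_\mu$ is $s_\lambda s_\mu$, so these top classes in the Buch expansion are exactly the Schubert classes of the homology class). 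Theorem \ref{thm:mobiusbrion} then applies with $X = X_\lambda \cap w_0 X_\mu$ and gives
\begin{displaymath}
[X_\lambda \cap w_0 X_\mu] = \sum_{X_e \subseteq \bigcup_{d \in D} X_d} \mu_{\mathcal P}(X_e)\,[X_e]
\end{displaymath}
as an element of $K(Gr_k\C^n)$. The remaining task is purely bookkeeping: the left side is $G_\lambda \cdot G_\mu$ since $[X_\lambda \cap w_0 X_\mu]$ equals the product of the classes $[X_\lambda]$ and $[w_0 X_\mu] = G_\mu$ in $K$-theory, and under the surjection from symmetric functions this is computed by Buch's rule; on the right side, the order ideal of Schubert classes below $\bigcup_{d\in D} X_d$ is precisely the image under $\nu' \mapsto \nu$ of the box-truncated poset $\mathcal P$, so $[X_e] = G_\nu$ and $\mu_{\mathcal P}(X_e) = \mu_{\mathcal P}(\nu')$. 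Equating coefficients of $G_\nu$ finishes the proof.

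The step I expect to be the main obstacle is verifying that the two descriptions of the underlying set and order actually coincide — that is, that the partitions $\nu$ appearing with nonzero coefficient in the Buch expansion of $g_\lambda g_\mu$ (after box truncation) are exactly the partitions indexing Schubert varieties in the order ideal generated by the top homology terms. One direction, that every such $\nu$ lies below some $d \in D$ in Bruhat order, requires knowing that if a set-valued tableau of content $\nu$ contributes, then some Young tableau of content $d \supseteq \nu$ with $|d| = |\lambda|+|\mu|$ contributes with $c_{\lambda\mu}^d = 1$; this should follow from the structure of the poset (every vertex lies below a top vertex, i.e.\ any set-valued filling can be reduced to a semistandard Young filling by deleting extra entries while preserving the RLW condition) combined with the multiplicity-freeness hypothesis. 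The converse, that every partition below the ideal genuinely appears in the expansion, likewise follows from the combinatorial structure. I would isolate this as a lemma about the Buch poset and treat the rest as routine; note that this is exactly the point where the promised "direct combinatorial proof" in later sections becomes the more substantial contribution, whereas here we are content to cite Theorem \ref{thm:mobiusbrion}.
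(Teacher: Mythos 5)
Your proposal is correct in outline, but it is not the paper's proof: it is the \emph{geometric} derivation that the paper explicitly sets aside. The paper does note that Theorem \ref{thm:mobiusbrion} implies Theorem \ref{mainthm}, which is essentially your argument; but its stated purpose is an \emph{independent combinatorial} proof, and its actual proof of Theorem \ref{mainthm} is a three-line deduction from Theorem \ref{sequencethm}: the sign-reversing involutions ($I_1$, $I_2$, $I_3$, the Reduction Lemma, and the case analysis over Stembridge's classification) pair off all Buch tableaux except the lexicographically minimal one $M$ into two-term exact sequences, so the signed count of tableaux of each content telescopes and reproduces the M\"obius recursion $\sum_{\alpha \geq \nu} \mu(\alpha) = 1$ directly on the combinatorial poset. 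What your route buys is brevity and conceptual clarity (the answer \emph{is} M\"obius inversion because of Brion/Knutson); what it costs is (i) dependence on Theorem \ref{thm:mobiusbrion}, which is cited from a paper in preparation \cite{K}, together with the standard but nontrivial geometric input that $[X_\lambda]\cdot[w_0\cdot X_\mu]=[X_\lambda\cap w_0\cdot X_\mu]$ in $K$-theory, and (ii) the poset-identification lemma you flag at the end.

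That lemma is the one place where your argument is genuinely incomplete rather than merely differently routed. Theorem \ref{thm:mobiusbrion} computes the M\"obius function on the full order ideal of Schubert varieties contained in $\bigcup_{d\in D}X_d$, whereas Theorem \ref{mainthm} computes it on the poset of contents that actually arise from Buch tableaux. Since the M\"obius function depends only on the poset, you must show these two posets coincide: every partition $\nu$ in the order ideal (i.e., fitting in the box and containing some $d$ with $c_{\lambda\mu}^{d}=1$) must occur as the content of at least one valid set-valued tableau, and conversely. Your sketch of one direction (delete extra entries to land on a top vertex) is plausible, but the other direction (every partition below the ideal is realized) is not obvious from Buch's rule alone and is precisely the kind of combinatorial fact the paper's involution machinery establishes en route. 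As written, your proof should state and prove this as a lemma rather than treat it as routine; without it the coefficient-matching step does not close.
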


These Schur-multiplicity-free cases have been classified by Stembridge \cite{Stem}, and our proof explicitly uses his analysis.

We now mention some speculative geometry that motivated our {\em combinatorial} proof of Theorem \ref{mainthm}. Buch shows that the expansion of $X_\lambda \cap (w_0 \cdot X_\mu)$ into Schubert classes has signs that alternate with dimension (\cite{Buch}). This suggests that there exists an exact sequence on sheaves
\begin{equation}\label{eq:pointseq} 0 \rightarrow \O_{\bigcup_{\nu \in {\mathcal P}} X_\nu} \rightarrow \bigoplus_{T,|T|=|\nu|+1} \O_{X_{content(T)}} \rightarrow \cdots \rightarrow  \bigoplus_{T, |T|=|\nu|+ k-1} \O_{X_{content(T)}} \rightarrow \cdots \end{equation}
where the $k^{th}$ nonzero term sums over Buch Littlewood-Richardson tableaux with $k-1$ extra entries. This leads to a sequence for the point in the Grassmannian corresponding to each $\lambda$,
$$0 \rightarrow \C^1 \rightarrow \cdots \rightarrow  \bigoplus_{T, |T|=|\nu|+ k-1, content(T) \subseteq \lambda} \C^1 \rightarrow \cdots $$
One can hope that this sequence is in fact exact.

Our main result is
\begin{Thm}\label{sequencethm} There exists such an exact sequence of vector spaces, and it can be explicitly constructed as a direct sum of exact sequences with exactly two non-zero terms. \end{Thm}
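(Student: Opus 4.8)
The plan is to reduce the statement to a purely combinatorial fact about matchings, so that the geometry enters only through the numerology. Note first that we are asked only for an exact sequence of \emph{vector spaces}, not for the sheaf sequence \eqref{eq:pointseq} itself, so we may work entirely inside Buch's rule. Write the product as $G_\rho\cdot G_\sigma$ in the $k\times(n-k)$ box (reserving $\lambda$ for the torus-fixed point $p_\lambda$), put $N=|\rho|+|\sigma|$, and --- for a partition $\lambda$ in the box with $p_\lambda\in X:=\bigcup_{d\in D}X_d$, equivalently with $X_\lambda\in\mathcal P$ in the sense of Theorem~\ref{thm:mobiusbrion} --- let $B_j(\lambda)$ be the set of Buch Littlewood--Richardson tableaux of shape $\rho\times\sigma$ (northeast part filled superstandardly, reverse lattice word) with exactly $j$ extra entries and with content $\nu$ satisfying $\nu\subseteq\lambda$; these are the summands of the tableau term of \eqref{eq:pointseq} with $j$ extra entries that survive restriction to $p_\lambda$. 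Adjoin a formal symbol $v$ and grade everything by placing $v$ in grade $0$ and $B_j(\lambda)$ in grade $j+1$. I claim the theorem follows once one produces, for every such $\lambda$, a \emph{perfect matching} $\phi$ on $\{v\}\sqcup\bigsqcup_{j\ge0}B_j(\lambda)$ all of whose edges join consecutive grades.

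Indeed, given such a $\phi$ one builds the sequence directly: let $C_0=\C\langle v\rangle$ and $C_j=\bigoplus_{T\in B_{j-1}(\lambda)}\C\langle T\rangle$ for $j\ge1$, and define $d$ by $d(x)=\phi(x)$ if $\phi(x)$ lies one grade above $x$ and $d(x)=0$ otherwise. In every matched pair exactly one element is matched upward, so $d^2=0$, and by construction $C_\bullet=\bigoplus_{\{x,\phi(x)\}}\bigl(0\to\C\langle x\rangle\to\C\langle\phi(x)\rangle\to0\bigr)$, literally a direct sum of two-term exact complexes with the prescribed terms. (If one insists on the natural ``insert one entry'' differential, the same conclusion follows up to isomorphism by Gaussian elimination for complexes, provided the coefficient of $\phi(T)$ in $d(T)$ is always a unit.) That a perfect matching is not numerically obstructed is immediate: by Theorem~\ref{mainthm} and Buch's rule, $\sum_{j\ge0}(-1)^j\#B_j(\lambda)=\sum_{X_\nu\in\mathcal P,\ \nu\subseteq\lambda}\mu_{\mathcal P}(X_\nu)$; since $\le_{\mathcal P}$ agrees with the Bruhat order on $\{X_\nu\}$ (Theorem~\ref{thm:mobiusbrion}), the condition $\nu\subseteq\lambda$ carves out exactly the up-set $\{X_\nu\in\mathcal P:X_\nu\ge_{\mathcal P}X_\lambda\}$ of $X_\lambda$ inside $\mathcal P$, on which the defining recursion for $\mu_{\mathcal P}$ gives sum $1$; hence $1-\#B_0(\lambda)+\#B_1(\lambda)-\cdots=0$, the signed count that a perfect matching requires.

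The substance of the proof is thus the explicit construction of $\phi$, and here I would work case by case through Stembridge's classification (Theorem~\ref{stembridge}). In each multiplicity-free case the Buch LR tableaux of shape $\rho\times\sigma$ have a transparent description: the freedom in filling $\sigma$ is confined to a single row, a single column, or a single box, depending on which of $\rho$, $\sigma$ is a rectangle or a fat hook. In each case I would single out a canonical cell and a canonical element to insert or delete, pair a tableau $T$ with the tableau $\phi(T)$ differing from it only there, and pair the ``root'' tableau of each connected component with $v$. One must then check: (i) the designated insertion never violates semistandardness, the reverse lattice condition, or the ambient box, so $\phi$ is well defined and fixed-point-free; (ii) $\phi$ respects the truncation by $\lambda$, that is $\operatorname{content}(\phi(T))\subseteq\lambda\iff\operatorname{content}(T)\subseteq\lambda$, so that no pair is severed on passing to $B_\bullet(\lambda)$; and (iii) $\phi$ is \emph{perfect}, leaving only the single $v$ per component unmatched until it is joined to its root. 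A cleaner route I would attempt first: in several of Stembridge's cases the LR tableaux factor as a product over mutually independent ``free rows and columns'', so that $B_\bullet(\lambda)$ is a tensor product of small complexes each visibly of the shape $0\to\C^a\to\C^b\to\cdots$ with an obvious two-term splitting; if this factorization holds uniformly, the theorem drops out and only the factorization needs verifying.

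The main obstacle is the bookkeeping in the hardest Stembridge cases: both $\rho$ and $\sigma$ rectangles --- the \cite[Proposition~7.2]{Buch} situation, where even the $K$-expansion can fail to be multiplicity-free, so some $B_j(\lambda)$ contain several tableaux of the same content and the matching is genuinely nontrivial --- and one a rectangle with the other a fat hook. In these cases the canonical-insertion rule is apt to produce an illegal tableau near the end of a row or column or at the box boundary, or to make two distinct tableaux compete for the same partner, breaking perfectness; resolving this will probably require a secondary induction that peels the rectangular block off $\sigma$ and recurses on a smaller product. The other delicate point is keeping $\phi$ consistent across all truncations $\lambda$ --- item (ii) above --- since $B_\bullet(\lambda)$ varies with $\lambda$ while $\phi$ must be fixed once and for all; the remark in the excerpt that the upward order ideal of any vertex of $\mathcal P$ stays inside the ambient box is precisely the structural fact underlying (ii), but it must be re-established at the level of the matching.
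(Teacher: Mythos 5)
Your reduction is sound and is in fact exactly the paper's strategy: a grade-shifting perfect matching on the Buch tableaux (with the minimal tableau $M$ playing the role of your formal symbol $v$) immediately yields the direct sum of two-term exact complexes $0\to\C\to\C\to 0$, which is precisely how the paper assembles the sequence in its proof of Theorem~\ref{sequencethm}. The Euler-characteristic check via M\"obius inversion is also fine. But the statement you were asked to prove is an existence claim whose entire content is the construction of that matching, and your proposal stops at ``here I would work case by case through Stembridge's classification \dots I would single out a canonical cell \dots.'' That is a plan, not a proof: points (i)--(iii) on your list are exactly where all the work lives, and none of them is verified. The paper spends Sections 2--4 doing this: a ``snake'' involution $I_1$ for the two-rectangles case, a Reduction Lemma (your ``peel the rectangular block off $\sigma$ and recurse'' instinct, made precise) that collapses cases (2a), (2b), (3a)--(3c), (4a), (4b) to smaller products, and a genuinely delicate three-stage involution $I_1,I_2,I_3$ for the remaining case. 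Without some version of these constructions the theorem is not established.

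Two further points. First, you have the difficulty map inverted: the two-rectangles case is the \emph{easiest} one (Buch's Proposition~7.2 says the $K$-expansion there \emph{is} multiplicity-free, and the paper's single involution $I_1$, widening or narrowing a ``snake'' at the first discrepancy with $M$, disposes of it); the genuinely hard case is a rectangle times a right near-rectangle, Stembridge's (3d), where a single canonical-insertion rule fails and the paper needs three successive involutions with a careful analysis of an ``intersnake.'' In any case, multiple tableaux of the same content are no obstruction to a matching on tableaux, so that worry is misplaced. Second, your item (ii) --- compatibility of $\phi$ with the truncation $\operatorname{content}(T)\subseteq\lambda$ --- is a real issue only in one direction: the partner with one fewer entry automatically has smaller content, but the partner with one more entry could a priori escape $\lambda$; you would need to check that each involution, restricted to the order ideal of $\lambda$, still matches everything but $M$ (the paper's observation that upward order ideals in $\mathcal P$ have decreasing content is what makes this work, but it does need to be said at the level of the matching, as you note).
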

The proof requires an involution which pairs terms differing in size by one. In some cases, we provide a single rule that matches all terms required. In other cases however, we must resort to a multistage divide and conquer approach, where the involution is defined differently on several disjoint subsets. We will come back to this theorem in Section \ref{last}. Assuming Theorem \ref{sequencethm}, we can prove Theorem \ref{mainthm} as a corollary.

\begin{proof}[Our proof of Theorem \ref{mainthm}.]
The exactness of the sequence (\ref{eq:pointseq}) gives us that the alternating sum of dimensions is 0. Thus the sum of the coefficients of the pairs of Buch tableaux, with signs alternating with number of extra numbers, is also 0. Together with the extra 1 from the single fixed point tableau, this is equivalent to the statement that the coefficient of $\nu'$ is given by the M\"obius function.
\end{proof}

\section{The 2-rectangles Case and the Reduction Lemma}
We begin by recalling Stembridge's definitions and classification of Schur-multiplicity-free cases.

\begin{Def} \cite{Stem} A partition $\mu$ with at most one part
size (i.e., empty, or of the form $(c^r)$ for suitable $c,r>0$) is said
to be a \textbf{rectangle}. If it has $k$ rows
or $k$ columns (i.e., $k=r$ or $k=c$), then we say $\nu$ is a \textbf{k-line rectangle}.
A partition $\mu$ with exactly two part sizes (i.e., $\mu=(b^rc^s)$
for suitable $b>c>0$ and $r,s>0$) is said to be a \textbf{fat hook}.
If it is possible to obtain a rectangle by deleting a single row or
column from the fat hook $\mu$, then we say that $\mu$ is a
\textbf{near rectangle}.

\end{Def}

We will call these \textbf{top, bottom, left,} or \textbf{right} near rectangles, to denote the location of the extra row or column. We say that a product of Schur functions is \textbf{multiplicity-free} if all of the Littlewood-Richardson coefficients of the expansion are 0 or 1.

\begin{Thm}\label{stembridge} \cite{Stem} The product of Schur functions $s_{\lambda}\cdot s_{\mu}$  is
multiplicity-free if and only if
\begin{enumerate}
\item $\lambda$ and $\mu$ are rectangles, or
\item (Pieri rule) $\lambda$ is arbitrary, and $\mu$ is a
\begin{enumerate}  \item one-row rectangle, or
\item one-column rectangle, or
\end{enumerate}
\item $\lambda$ is a rectangle and $\mu$ is a
\begin{enumerate}
\item left near-rectangle, or
\item bottom near-rectangle, or
\item top near-rectangle, or
\item right near-rectangle, or
\end{enumerate}
\item $\mu$ is a fat hook and $\lambda$ is
\begin{enumerate}  \item two-row rectangle, or
\item two-column rectangle,
\end{enumerate}
\end{enumerate}
\end{Thm}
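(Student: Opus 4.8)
The plan is to prove the two implications separately, since they have quite different flavours. For \emph{sufficiency} one checks, directly from the Littlewood--Richardson rule, that every coefficient $c^{\nu}_{\lambda\mu}$ is $0$ or $1$ in each of the families (1)--(4). Families 2(a) and 2(b) are literally the Pieri and dual Pieri rules, so there is nothing to do. For family (1), with $\lambda=(c^r)$ and $\mu=(d^s)$ both rectangles, I would show that a semistandard filling of the skew shape $\nu/\lambda$ whose reverse reading word is a (reverse) lattice word has no freedom: scanning $\nu/\lambda$ column by column, the lattice condition forces which entry occupies each available box, so the LR tableau is unique whenever it exists. Families (3) and (4) carry the real content. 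Using the symmetry $c^{\nu}_{\lambda\mu}=c^{\nu}_{\mu\lambda}$ I would always feed the ``simpler'' factor's content into the skew shape over the other factor. In (3) the inner shape is a rectangle and the content is a near-rectangle, i.e.\ a partition with at most two distinct part sizes plus one extra row or column; tracking these (at most) two part sizes against the rigid rectangular inner border again leaves at most one admissible filling, and I would organise this into the four sub-cases top/bottom/left/right, using conjugation to halve the work. In (4) the content is $(a,a)$ (or, after conjugation, a two-column rectangle): with only the letters $1$ and $2$ available, $a$ of each, the lattice condition together with the fat-hook shape of $\mu$ again pins down the tableau.

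For \emph{necessity} I would argue the contrapositive: if $(\lambda,\mu)$ lies in none of (1)--(4), then some $c^{\nu}_{\lambda\mu}\ge 2$. The engine is a short list of invariance/reduction principles: (i) $c^{\nu}_{\lambda\mu}=c^{\nu}_{\mu\lambda}$; (ii) $c^{\nu}_{\lambda\mu}=c^{\nu'}_{\lambda'\mu'}$ under conjugation; (iii) in $N$ variables $s_{\lambda+(1^{N})}=e_{N}\,s_{\lambda}$, so for $N$ large, adjoining a common full column to $\lambda$ and $\nu$ (and, via (ii), a common full row) leaves $c^{\nu}_{\lambda\mu}$ unchanged; and (iv) a monotonicity principle ensuring that a pair which contains a ``bad'' pair in a manner compatible with (iii) is itself bad. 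Granting these, the set of pairs outside (1)--(4) is the up-set generated, in the resulting partial order, by a finite explicit list of minimal counterexamples; the primordial one is $s_{21}\cdot s_{21}=\cdots+2\,s_{321}+\cdots$, and a short further list of seeds handles the remaining branches (a rectangle with at least three rows and at least three columns against a fat hook that is not a near-rectangle; a rectangle against a partition with three or more distinct part sizes; and the like). One then finishes with the finite computation verifying that each seed genuinely has a multiplicity-$2$ term. Identifying the seed list itself is a bookkeeping argument: a case split on the number of distinct part sizes of $\lambda$ and of $\mu$, with (i)--(iii) collapsing the symmetric situations.

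The main obstacle is point (iv). Littlewood--Richardson coefficients are \emph{not} monotone under arbitrary deletion of boxes, so one must isolate exactly the enlargements under which a bad pair stays bad --- appending full rows or columns, and uniformly inflating the two part sizes of a fat hook --- and then prove that every pair outside the four families really is dominated, in that restricted order, by one of the finitely many seeds. Pinning down the correct notion of containment and confirming that the seed list is exhaustive is the delicate heart of the argument; once that is settled, both the seed computations in the necessity direction and the case analysis in the sufficiency direction are finite, essentially mechanical verifications.
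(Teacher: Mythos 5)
This theorem is not proved in the paper: it is quoted verbatim from Stembridge \cite{Stem} (the paper only remarks that its own analysis ``explicitly uses his analysis''), so there is no in-paper argument to compare your proposal against. Judged on its own terms, your outline does track the general shape of Stembridge's actual proof --- sufficiency by direct analysis of Littlewood--Richardson fillings, necessity by reducing an arbitrary pair outside the four families to a finite list of minimal ``bad'' pairs via operations that cannot decrease multiplicity --- but it is an outline with a genuine gap, and it is the gap you yourself flag. Your principle (iv) is the entire engine of the necessity direction, and as stated it is not a principle but a wish: Littlewood--Richardson coefficients are not monotone under deletion of boxes, and the correct substitute is the semigroup inequality $c^{\nu+\nu'}_{\lambda+\lambda',\,\mu+\mu'}\ \ge\ c^{\nu}_{\lambda\mu}\,c^{\nu'}_{\lambda'\mu'}$, which is a nontrivial theorem in its own right (it is what makes ``append a full row/column'' and ``inflate the two part sizes of a fat hook'' multiplicity-preserving enlargements). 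Without stating and invoking that inequality, and without actually producing the seed list and proving it exhaustive under the resulting order, the necessity half is not a proof. The exhaustiveness check is precisely where Stembridge's case analysis on the number of distinct part sizes of $\lambda$ and $\mu$ does real work; calling it ``bookkeeping'' understates it.

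The sufficiency half is also thinner than it looks. Families (1), 2(a), 2(b) are fine: the rectangle-times-rectangle uniqueness of the LR filling and the Pieri rules are standard. But for families (3) and (4) the claim that the near-rectangle or two-row content ``again pins down the tableau'' against the rigid inner border is exactly what needs proving, and it is sensitive: content $(2,1)$ (two letters, not a rectangle) already gives $c^{(3,2,1)}_{(2,1),(2,1)}=2$, and a two-row rectangle content against a $\mu$ with three part sizes also fails, so the argument must genuinely use both the special content \emph{and} the fat-hook/rectangle constraint on the inner shape. A careful version of this is exactly the kind of column-by-column analysis the present paper carries out (in the $K$-theoretic setting) in its Lemmas on column blocks, and it occupies several pages there; it is not a one-line observation. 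In short: correct strategy, honestly flagged weak point, but the two load-bearing steps --- the semigroup monotonicity with a verified seed list, and the uniqueness arguments in cases (3) and (4) --- are asserted rather than established.
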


\noindent (or vice-versa). We note that Stembridge classifies these as 4 cases, but in our analysis they naturally split further and thus have listed them as such. We will start with case (1) and use it to show all remaining cases except for (3d), which requires its own proof technique. We also note that Stembridge's products are not restricted inside a box. Putting our products inside an ambient box gives us a larger class of cases that may not be inherently multiplicity-free, but which lose the terms with multiplicity when considered as being inside a small enough box. These have been classified by Thomas and Yong (\cite{TY}), and we discuss how to generalize to these in the last section of this paper.

Henceforth, $\mu$ will be used to denote the southwest term, and $\lambda$ the northeast partition. In this way, it will be clear of which term we are considering the fillings.

First, let us consider a graphical interpretation of the reverse lattice word condition. We create a diagram by putting $\lambda$ in the northwest corner of the ambient box. As we read the word of $\mu$, for each $i$, we place a box in our diagram in row $i$ adjacent to the rightmost box. Note that the reverse lattice word condition is equivalent to this diagram always being a partition. For values $a$ and $a+1$ in the word of a tableau, we say that  $a+1$ \textbf{depends on} $a$ if the corresponding $a+1$ box occurs directly below the corresponding $a$ box. That is, if that particular $a$ weren't there, we would not be allowed to add that particular $a+1$. For example, in Figure \ref{rectcols}, we say that 3 depends on 2, but 5 does not depend on anything.

\begin{figure}[htbp]
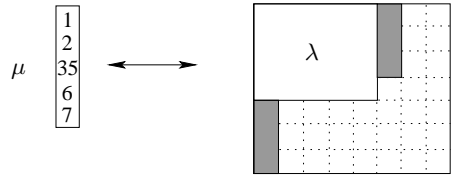 \caption{For $\lambda=(5^3)$ and ambient box $(6 \times 8)$, the correspondence of a column in $\mu=(1^6)$ to a partition.}\label{rectcols}
\end{figure}

\begin{Lem} Consider the product $G_\lambda \cdot G_\mu$ where $\lambda=(\lambda_1^{\beta})$ and $\mu=(\mu_1^\alpha)$ in an ambient box of size $k \times (n-k)$. A filling of a column in tableau $T$ of shape $\mu$ is built out of a basic set of blocks, given by
$$[1,2,\ldots,p]^{tr} \text{ for } p \leq \alpha \text{ and } [\beta+1,\beta+2,\ldots,q]^{tr} \text{ for } q \leq \beta + \alpha$$ where one overlap between the blocks is allowed, and $tr$ indicates transpose. \end{Lem}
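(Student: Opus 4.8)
The plan is to analyze a single column of the southwest tableau $T$ of shape $\mu = (\mu_1^\alpha)$ directly from the combined constraints: semistandardness (strict increase down the column, weak increase across rows), the set-valued condition, the reverse-lattice-word condition, and the fact that $\lambda = (\lambda_1^\beta)$ sits in the northwest corner as a $\beta$-row rectangle. The key observation to set up first is that, since $\lambda$ is superstandard (forced, as noted in the text, by the RLW condition on the northeast tableau), row $i$ of $\lambda$ contributes $\lambda_1$ copies of the letter $i$ to every initial segment of the word before any letter of $\mu$ is read. Hence when we start reading $\mu$, the running multiplicities are $(\lambda_1, \lambda_1, \ldots, \lambda_1, 0, 0, \ldots)$ with $\beta$ leading $\lambda_1$'s. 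This means the letters $1, \ldots, \beta$ are ``free'' to appear in $\mu$ with some slack, while a letter $\beta + j$ can only be used once enough copies of $\beta + j - 1$ have been read.

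The core of the argument is then a local analysis of what a legal column can look like. First I would record that within one column the entries are strictly increasing, so a column is determined by listing, from top to bottom, the sets $S_1 < S_2 < \cdots$; semistandardness down a column forces $\max S_t < \min S_{t+1}$, but the set-valued condition allows each $S_t$ to be an interval of consecutive integers. The RLW condition, read through the graphical ``always a partition'' interpretation described just before the lemma, controls which intervals may start the column and how consecutive intervals may be glued: an interval beginning at $1$ may have length at most $\alpha$ (there are only $\alpha$ rows, so at most $\alpha$ copies of each small letter available before column constraints bite), giving the block $[1,2,\ldots,p]^{tr}$ with $p \le \alpha$; an interval that begins in the ``$\lambda$-supported'' range starts at $\beta + 1$ and, because the supply of letters $\le \beta$ is only $\lambda_1$ per row plus what the earlier part of this very column contributed, can run up to $\beta + \alpha$, giving $[\beta+1, \ldots, q]^{tr}$ with $q \le \beta + \alpha$. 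I would then show that any column decomposes uniquely into such blocks stacked vertically, and that at most one ``overlap'' — i.e. one place where the top of a $[\beta+1,\ldots]$-type block shares a value with, or is forced to be adjacent to, the bottom of a preceding block — can occur, because a second overlap would force a repeated letter down the column or violate the partition-shape condition of the RLW diagram. This last counting step, pinning down exactly the ``one overlap allowed'' clause and no more, is the part I expect to be the main obstacle, since it requires carefully tracking the running multiplicities of $\beta$ and $\beta+1$ simultaneously against the column length and is the place where an off-by-one error is easiest to make.

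Concretely I would carry out the steps in this order: (1) reduce to superstandard $\lambda$ and compute the initial running multiplicity vector contributed by $\lambda$; (2) set up notation for a single column as an increasing chain of intervals $S_1 < \cdots < S_t$ and translate the RLW/partition condition into inequalities on the $S_j$; (3) prove the ``starting value'' dichotomy — every maximal interval of consecutive letters in the column either starts at $1$ or starts at $\beta+1$ — using that strict column-increase plus the multiplicity bound forbids an interval to start at any other value without the diagram failing to be a partition; (4) derive the length bounds $p \le \alpha$ and $q \le \beta+\alpha$ from the row count $\alpha$ and the $\lambda$-supply $\lambda_1$; (5) show the blocks stack and that a single overlap between two blocks is permissible while two are not, by exhibiting the forced repeated-letter contradiction; (6) conclude uniqueness of the decomposition. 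I would expect to lean on Stembridge's case (1) structure (both factors rectangles) only implicitly here, since the lemma is stated purely in the rectangle-times-rectangle setting, and to defer the general-shape consequences to the later sections that invoke this lemma.
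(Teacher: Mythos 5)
Your overall strategy---decompose a column into maximal runs of consecutive letters, use the $\lambda_1$-fold supply of the letters $1,\dots,\beta$ coming from the superstandard filling of $\lambda$ to explain why a run may begin at $\beta+1$, and bound run lengths by the column height $\alpha$---is the same strategy the paper follows, but there is a genuine gap at your step (3), which is where essentially all of the content of the lemma lives. You assert that ``strict column-increase plus the multiplicity bound'' forbids a run from starting at any value other than $1$ or $\beta+1$. But the multiplicity (partition-diagram) condition is a condition on the \emph{whole} word of $T$, not on a single column: if a column reads $[\dots,i,i+2,\dots]$ with $i+2\neq\beta+1$, the occurrence of $i+1$ that this $i+2$ needs could a priori sit in a \emph{different} column, read earlier in $w(T)$, and nothing in your running-multiplicity bookkeeping rules that out. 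This cross-column possibility is exactly what the paper's proof spends its effort eliminating: it locates where such an $i+1$ could live (to the right in the same row, or in a row above), kills the first option by semistandardness, and then chases the dependency of that $i+1$ on some $i$---again splitting into ``same column'' versus ``other column''---until every branch contradicts semistandardness. In other words, the column-independence of the reverse-lattice-word condition is the conclusion of the lemma, not an input to it, and proving it requires an interaction between the RLW condition and semistandardness \emph{across} columns that your outline never engages.

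Two smaller points. You flag the ``at most one overlap'' count as the expected main obstacle, but once the two-block structure is in hand that part is immediate (a second overlap would put entries of both blocks into two distinct boxes of the column, contradicting strict increase down the column); the paper dispatches it in one sentence. And your justification of $q\le\beta+\alpha$ via the supply of letters $\le\beta$ is off target: the bound follows simply because the block $[\beta+1,\dots,q]^{tr}$ occupies $q-\beta\le\alpha$ boxes of a column of height $\alpha$. Neither of these is fatal, but the missing cross-column argument in step (3) is.
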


\begin{proof} We claim that in terms of the requirements of the reverse lattice word, each column is independent of the others. That is, for each entry, any entries on which it depends occur in the same column. If not, we show that there would be a gap in the column that is not of the form
above. Say we have a column of the form $C=[1,2,..i,i+2,...]^{tr}$. Then in order to satisfy the reverse lattice word condition, there must be a ($i+1)$ in $T$ that comes before $(i+2)$ in $w(T)$, which implies that the $(i+1)$ would be either to the right in the same row (which would violate the semi-standardness condition) or in one of the rows above. Then $(i+1)$ must also come after an $i$ in the word, and either
\begin{itemize}
\item $(i+1)$ depends on the $i$ in $C$, or
\item $(i+1)$ depends on an $i$ in a different column.
\end{itemize}
In the first case, $(i+1)$ would have to be to the left of $i$ in the same row (also violates semi-standardness) or below (same row as $(i+2)$). In the second case $(i+1)$ must have a corresponding  $1,\ldots,i$ preceding it in $w(T)$, but this creates the same problem. A similar argument holds for a gap in $C=[\beta+1,\beta+2,\ldots]^{tr}$. Finally, we note that more than one overlap would violate semi-standardness. \end{proof}

This lemma allows us to take the complicated reverse lattice word condition, and turn it into a simple description of the valid column fillings.

\begin{Exa} Let $\mu=(2,2)$, $\lambda=(3,3,3)$. Then the only possible column fillings for $\mu$ are:

$$\left[{ \begin{array}{c} 1\\          2      \end{array}}\right],
\left[\begin{array}{c}        4\\        5      \end{array}\right],
 \left[\begin{array}{c}        1\\        4      \end{array}\right],
  \left[\begin{array}{c}        14\\        5      \end{array}\right],
   \left[\begin{array}{c}        1\\        24      \end{array}\right].$$

\noindent In Figure \ref{G22G333}, we can see how these columns form tableaux.
\end{Exa}

\begin{figure}[htbp]
\epsfig{file=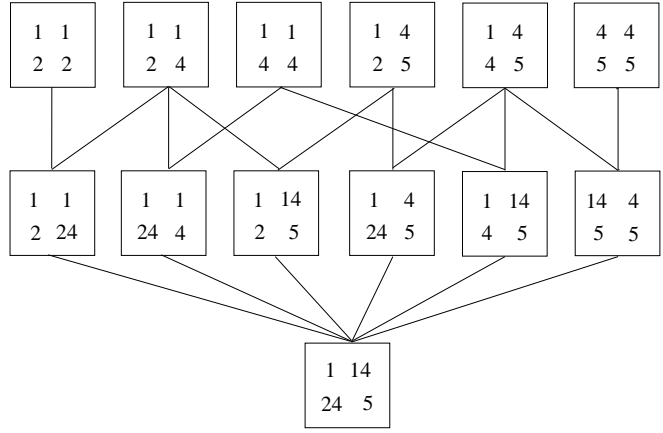, scale=0.75} \caption{The poset corresponding to $G_{2,2} \cdot G_{3,3,3}$. This product satisfies Stembridge case (1).}\label{G22G333}
\end{figure}

 We define the \textbf{snake} of a filling of $\mu$ in the 2-rectangle case as the union of the lines below all single entries $i$ occurring in box $(i,j)$, and the lines above all entries $\lambda_1+1$. The lines are co-linear except for where they enclose entry $i$ in boxes with entry $\{i,\lambda_1+1\}$. Note that a snake uniquely characterizes a filling of a tableau, as the values both above and below the snake are fixed. \\

For a given poset, let $M$ denote the first tableau in lexicographic order, and $M^+$ the upper block defined by the snake. In $M^+$, each box $(i,j)$ has the single entry $i$.  Note that $M$ is a semi-standard Young tableau. In order to prove Theorem \ref{sequencethm}, we will define sign-reversing involutions on the poset to show that we can match all terms except one, $M$. More specifically, our involutions will match terms differing in content size by one and thus terms in adjacent rows. This will show that the K-theoretic Littlewood-Richardson coefficients are given by the M\"obius function.

\begin{Def} We define the function $I_1: \text{SS-SVT} \rightarrow \text{SS-SVT}$ as follows: Compare each box in $M^+$ to the corresponding box in $T$, top to bottom down a column, and left to right across the tableau. If they all match, let $I_(T)=T$. Otherwise, call the first box that doesn't match $(i,j)$.
\begin{itemize} \item If $M(i,j)$ is not in $T(i,j)$, let $T'(i,j) = T(i,j) \cup \{ M(i,j) \}$.
\item  If $M(i,j)$ is in $T(i,j)$, let $T'(i,j)=T(i,j) \backslash M(i,j)$.
\end{itemize}
Then $I_1$ matches $T$ with $T'$. Graphically, applying $I_1$ is equivalent to narrowing or widening the snake by the one box $(i,j)$.\end{Def}

\begin{figure}[htbp]
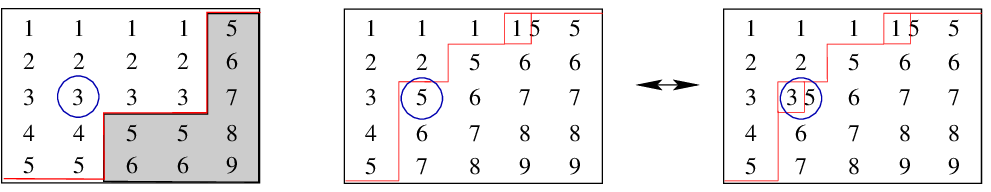 \caption{An example of the involution on terms in the product $G_{4^4} \cdot G_{5^5}$, each term drawn with its snake.}\label{invrect}
\end{figure}

\begin{Prop}\label{2rect} $I_1$ is a sign-reversing involution on the poset corresponding to the product $G_\lambda \cdot G_\mu$ in the case where both partitions are rectangles, $\lambda=(\lambda_1^{\beta})$ and $\mu=(\mu_1^a)$, whose only fixed point is $M$. \end{Prop}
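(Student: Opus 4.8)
The plan is to verify three things: (i) $I_1$ is well-defined as a map into SS-SVT, (ii) $I_1$ is an involution, and (iii) its only fixed point is $M$ and it reverses the sign $(-1)^{|T|-|\lambda|-|\mu|}$ (equivalently, changes $|T|$ by one). Steps (ii) and (iii) are essentially formal once (i) is in place. For (iii), the sign-reversing property is immediate from the definition: if $(i,j)$ is the first box where $T$ disagrees with $M^+$, we either add $M(i,j)=i$ to $T(i,j)$ or delete it, so $|T'|=|T|\pm 1$ and the sign flips. A tableau $T$ is fixed precisely when every box of $T$ agrees with the corresponding box of $M^+$ — but $M^+$ is the \emph{superstandard} upper block, so the only SS-SVT all of whose boxes lie in (and equal) the superstandard filling is $M$ itself; this uses the remark that a snake uniquely determines its filling, together with the observation that $M$'s snake is the ``widest'' one, lying entirely along the boundary between row-$i$ entries $i$ and the block of entries $\lambda_1+1$.

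The main work is (i): showing $T'$ is again a semistandard set-valued tableau whose column fillings are of the form classified in the Reduction Lemma, so that $T'$ lies in the same poset. Here I would use the ``snake'' picture heavily. Since $\lambda$ and $\mu$ are both rectangles, the Reduction Lemma says each column of $\mu$ is one of the allowed blocks $[1,\dots,p]^{tr}$, $[\beta+1,\dots,q]^{tr}$, possibly glued with a single overlap; equivalently, a column is determined by how far its lower run of small entries $1,2,\dots$ descends and how far its upper run $\dots,\beta+1$ rises, and the snake records exactly the boundary between these two regions. I claim the first disagreement box $(i,j)$ is always at (or adjacent to) the snake: above it everything agrees with $M^+$, and $M^+$ has entry $i$ in box $(i,j)$, so the disagreement is exactly about whether $i$ is present in $T(i,j)$. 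Toggling $i$ in that one box either lengthens or shortens column $j$'s lower run by one — i.e. moves the snake by one box — and one checks this keeps column $j$ a legal block (the result is still of the form $[1,\dots,p]^{tr}$ with at most one overlap, since we only change whether $i$ is the last element of the lower run). Then I must check the reverse lattice word / partition-diagram condition is preserved: adding an $i$ can only help earlier in the word, and the $i$ we add sits directly above an $(i+1)$-or-nothing in the same column, so no dependency is broken; deleting an $i$ requires that no entry later in the word depended on it, which follows because the box below $(i,j)$ in $T$ was already either empty of $i+1$ or contained an $i+1$ depending on an $i$ in a column to the left — here the columnwise independence from the Reduction Lemma does the work.

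The final step is to confirm $I_1\circ I_1=\mathrm{id}$: applying $I_1$ to $T'$, the first disagreement box with $M^+$ is still $(i,j)$ (all boxes before $(i,j)$ were unchanged and still agree with $M^+$), and the toggle at $(i,j)$ is its own inverse, so $I_1(T')=T$. I expect the genuine obstacle to be bookkeeping in step (i): verifying in all block-configurations that the single-box toggle at the first disagreement keeps the column in the Reduction Lemma's list and keeps the global reading word a reverse lattice word, particularly the boundary cases where $(i,j)$ lies at the very top or bottom of its column, or where the overlap box is involved. Once that case analysis is dispatched, the involution, sign-reversal, and fixed-point claims all drop out, proving Proposition \ref{2rect}.
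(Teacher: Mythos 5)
Your proposal is correct and follows essentially the same route as the paper: identify the first box of $M^+$ where $T$ disagrees, note the disagreement is exactly whether $i$ lies in $T(i,j)$, use the columnwise independence from the column-block lemma to see that toggling $i$ yields a valid tableau, and conclude sign-reversal from $|T'|=|T|\pm 1$ with $M$ the unique fixed point via the snake characterization. You spell out the involution and fixed-point checks in more detail than the paper's (quite terse) proof, but the underlying argument is the same.
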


\begin{proof} The choice of box $(i,j)$ is well-defined. In a tableau $T$, if $T(i,j)$ doesn't match $M(i,j)=i$, then either $T(i,j)=[\beta+1]$ or $T(i,j)=[i,\beta+1]$. Since $I_1$ is only dependent on those terms above it in the $j^{th}$ column, adding or removing it will not affect the rest of the filling, and will give another valid tableau.  Since $I_1$ matches a tableau $T$ with another tableau $T'$ that has either one more or one fewer element, it is clearly sign-reversing.
\end{proof}

\begin{figure}[htbp]
\epsfig{file=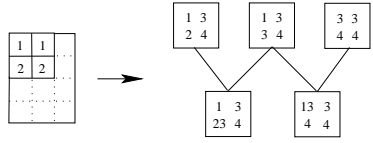, scale=0.4} \caption{A small example: for the ambient box $4 \times 3$, $g_{22} \cdot g_{22}=g_{3311}+g_{3221}+g_{2222}-g_{3321}-g_{3222}$.}\label{G22G22}
\end{figure}

 Consider the product $G_{\lambda} \cdot G_{\mu}$ in a $k \times (n-k)$ box for a rectangle of shape $\lambda=(\lambda_1^{\beta})$ and an arbitrary tableau of shape $\mu=(\mu_1^{\alpha_1},\ldots,\mu_m^{\alpha_m})$. Define the \textbf{upper rectangle} of $\mu$, \textbf{$\mu^{u}:=\mu_1^{\alpha_1}$}. Let $M^{u}$ denote the filling of the upper rectangle of $M$.

\begin{Lem}[Reduction Lemma]\label{redlem} Consider the poset of fillings of the two tableaux representing the product $G_{\lambda} \cdot G_{\mu}$ in the ambient box $k \times (n-k)$, where $\lambda=(\lambda_1^{\beta})$ and $\mu=(\mu_1^{\alpha_1},\ldots,\mu_m^{\alpha_m})$. Let $M^{u}=(1^{\tau_1},\ldots,(c+a)^{\tau_{\beta+\alpha_1}})$. Under $I_1$, the poset of fillings of $\mu$ reduces to the product of $\lambda'$ and $\mu'$, where $\lambda'=(\lambda \cup M^{u})|_{(\alpha_1+1,\ldots)}$ and $\mu'=\mu \setminus \mu^{u}=\mu|_{(2,\ldots, m)}$, with the product in ambient box
 $(k-\alpha_1) \times (\lambda_1+\tau_{\alpha_1})$.
\end{Lem}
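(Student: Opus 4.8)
The plan is to show that the involution $I_1$, once it has finished acting on all boxes lying in the upper rectangle $\mu^u$, forces those rows of every surviving (fixed) tableau to coincide with $M^u$, and that the residual freedom in the filling is exactly the freedom of a smaller product. First I would make precise the claim implicit in the statement: a tableau $T$ of shape $\mu$ is a fixed point of the restriction of $I_1$ to the boxes of $\mu^u$ if and only if $T|_{\text{rows }1,\ldots,\alpha_1} = M^u$. This is the content of the ``compare top-to-bottom, left-to-right, stop at the first mismatch'' rule: if $T$ agrees with $M^+$ on every box of $\mu^u$ then no box in $\mu^u$ is ever selected; conversely if the first mismatch occurs in $\mu^u$, then $I_1$ moves $T$ to a genuinely different tableau (adding or deleting the entry $M(i,j)=i$ in box $(i,j)$), so $T$ is not fixed. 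The key point to verify here, already isolated in the 2-rectangles argument of Proposition \ref{2rect}, is that toggling that one box produces another legal SS-SVT whose reverse-lattice-word property is undisturbed, because (by the column description in the Lemma of Section 2, suitably adapted) the admissibility of a box depends only on the boxes above it in its own column.

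Next I would identify the ``reduced'' data. Having pinned rows $1,\ldots,\alpha_1$ of $T$ to be $M^u = (1^{\tau_1},\ldots,(c+a)^{\tau_{\beta+\alpha_1}})$, the entries appearing in those rows, read in the reverse-lattice diagram, simply get absorbed into the northeast shape: the new northeast partition is $\lambda' = (\lambda \cup M^u)|_{(\alpha_1+1,\ldots)}$, i.e. we glue the $\alpha_1$ rows of $M^u$ onto $\lambda$ and then forget the top $\alpha_1$ rows, and the new southwest shape is $\mu' = \mu|_{(2,\ldots,m)}$, the fat part of $\mu$ with its top block removed. The ambient box shrinks accordingly: we have used up $\alpha_1$ rows, leaving $k-\alpha_1$, and the relevant width in the top surviving row becomes $\lambda_1 + \tau_{\alpha_1}$, the length of the longest row of $M^u$ that still matters. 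I would then check that the bijection ``$T \mapsto T|_{\text{rows }\alpha_1+1,\ldots,m}$'' is a poset isomorphism from the fixed-point subposet of the original product onto the full poset of the product $G_{\lambda'}\cdot G_{\mu'}$ in the new box: the content correspondence is $\mathrm{content}(T) = M^u \cup \mathrm{content}(T')$, a constant shift, so the containment order on contents — which defines $\leq_{\mathcal P}$ — is preserved, and the reverse-lattice condition on $w(T)$ restricts to exactly the reverse-lattice condition on $w(T')$ relative to $\lambda'$, again because columns are independent.

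The main obstacle I anticipate is the bookkeeping around overlaps and the precise value of the shrunken width. Because a column filling may contain one overlapping entry (a box $\{i,\lambda_1+1\}$ in the 2-rectangle blocks, and its analogues here), the shape $M^u$ is genuinely a set-valued filling, not a straight tableau, so ``gluing $M^u$ onto $\lambda$'' has to be interpreted through the reverse-lattice diagram rather than naively by stacking Young diagrams; getting $\tau_{\alpha_1}$ to be exactly the right truncation parameter, and verifying that no entry of $\mu'$ can ever ``see'' a box of $M^u$ beyond column $\lambda_1 + \tau_{\alpha_1}$, is where the argument has to be careful. A secondary subtlety is confirming that $I_1$ really does process $\mu^u$ completely before touching any box of $\mu'$ — this follows from the left-to-right, top-to-bottom scan order, but one should note that after a toggle inside $\mu^u$ the scan restarts, so an induction on the number of mismatched boxes in $\mu^u$ is the clean way to see that the fixed-point set of $I_1$ is precisely the set of tableaux with $T|_{\mu^u} = M^u$, whereupon $I_1$ on the complement is literally the involution $I_1$ for the reduced product.
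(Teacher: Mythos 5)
Your proposal follows essentially the same route as the paper: apply $I_1$ to the boxes of the upper rectangle $\mu^u$, invoke the 2-rectangles analysis (Proposition \ref{2rect} and the column-independence of the reverse-lattice condition) to cancel everything except tableaux agreeing with $M^u$ there, and then identify the surviving subposet with the poset of $G_{\lambda'}\cdot G_{\mu'}$ by absorbing $w(\mu^u)$ into the northeast shape. The extra care you take with the content-shift poset isomorphism and the width bookkeeping is detail the paper leaves implicit, not a different argument.
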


\begin{proof} Apply $I_1$ to the upper rectangle of $\mu$. Then by Theorem \ref{2rect}, all terms are canceled except those whose upper rectangle filling matches that of $M$. Consider $w^+ := w(\lambda) \bigcup w(\mu^u)$ obtained from the standard filling of ($\lambda$) and this filling of the upper rectangle. Then the remaining poset is equivalent to the poset of the product $G_{\lambda'} \cdot G_{\mu'}$, where $\lambda'$ is the shape determined by a standard filling of the partition of shape corresponding to $w^+$ with the top $\alpha_1$ rows truncated, with the added restriction on the number of 1's to the number of $\alpha_1$'s in $w^+$.
 \end{proof}

Note that by our construction of the Reduction Lemma, $l(\lambda) \leq l(\lambda')$ and $l(\mu) < l(\mu')$. Thus we have reduced to a product of two smaller tableaux.

\begin{figure}[htbp]
\begin{center}
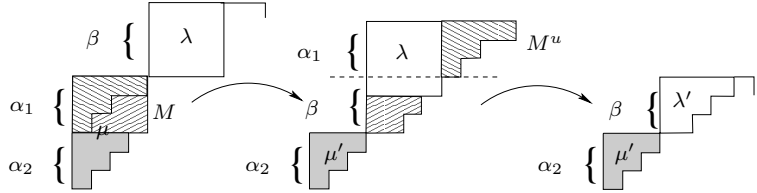
\caption{A graphical description of the Reduction Lemma.}\label{redlem}
\end{center}
\end{figure}

\section{Applications to Stembridge Cases}

Now, we will use our Reduction Lemma to prove Theorem \ref{sequencethm} in each of the Stembridge cases. As in the statement of the lemma, we use the notation $\lambda=(\lambda_1^{\beta})$ and $\mu=(\mu_1^{\alpha_1},\ldots,\mu_m^{\alpha_m})$, and the poset of fillings of the two tableaux representing the product $G_{\lambda} \cdot G_{\mu}$ in ambient box $k \times (n-k)$. We denote the filling of the minimal element $M$ (of shape $\mu$) as $(1^{\tau_1},\ldots,(\beta+\alpha_1)^{\tau_{\beta+\alpha_1}})$. We will successfully apply the Reduction Lemma to most of the Stembridge cases. We omit the discussion of the ambient box in the following proofs as it is not particularly enlightening, but the size of the ambient box after the lemma is applied is easily determined in each case.


\begin{Prop}\label{2a} [Pieri-Stembridge case (2a)] Theorem \ref{sequencethm} holds in the case of a single row and an arbitrary tableau $(\beta=1)$: let $\lambda=(\lambda_1)$ and $\mu=(\mu_1^{\alpha_1},\ldots,\mu_m^{\alpha_m})$. \end{Prop}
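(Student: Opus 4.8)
The plan is to apply the Reduction Lemma (Lemma \ref{redlem}) with $\beta = 1$ and see that it collapses the problem to a case that has already been handled. First I would set up the notation: with $\lambda = (\lambda_1)$ a single row, the superstandard filling of $\lambda$ puts a single $1$ in every one of its $\lambda_1$ boxes, so the ``northeast'' contribution to the word $w(T)$ is simply $\lambda_1$ copies of $1$. The southwest shape is $\mu = (\mu_1^{\alpha_1},\ldots,\mu_m^{\alpha_m})$. The upper rectangle of $\mu$ is $\mu^u = \mu_1^{\alpha_1}$, and by the Reduction Lemma the involution $I_1$ cancels all terms whose filling of $\mu^u$ differs from that of the minimal tableau $M$, leaving a poset isomorphic to that of $G_{\lambda'} \cdot G_{\mu'}$ where $\mu' = \mu|_{(2,\ldots,m)}$ and $\lambda' = (\lambda \cup M^u)|_{(\alpha_1+1,\ldots)}$.

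The key observation I would establish is that when $\beta = 1$, the reduced northeast partition $\lambda'$ is again a single row (or empty). Indeed, $M^u$ is the superstandard-type minimal filling of the rectangle $\mu_1^{\alpha_1}$ sitting southwest of the single row $\lambda = (\lambda_1)$; reading the combined word $w^+ = w(\lambda) \cup w(\mu^u)$ and drawing the corresponding partition, the top row has length $\lambda_1 + \tau_1$ (where $\tau_1$ is the multiplicity of $1$ in $M$, i.e. the number of $1$'s contributed by the top row of $\mu^u$ together with those of $\lambda$), and all subsequent rows have length equal to $\tau_i$ for $i \ge 2$. Truncating the top $\alpha_1$ rows, as prescribed by the lemma, removes precisely the block coming from the rectangle $\mu^u$. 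Since $\mu^u$ is a rectangle of width $\mu_1$ stacked beneath a single row, its minimal snake filling has each of its $\alpha_1$ rows superstandard, so after truncation we are left with either a single row or nothing — the point being that $\lambda' $ inherits the ``single row'' structure because $\beta=1$ means $\lambda$ contributed only to row $1$, which gets absorbed into the truncated block. I would check this carefully via the graphical description in Figure \ref{redlem}.

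Consequently, after one application of the Reduction Lemma the product $G_{\lambda'} \cdot G_{\mu'}$ is again of the form (single row) $\times$ (arbitrary shape), but with $l(\mu') < l(\mu)$ by the remark following Lemma \ref{redlem}. This sets up an induction on the number of distinct part sizes $m$ of $\mu$ (equivalently on $\sum \alpha_i$): the base case $m = 0$ is trivial ($\mu$ empty, the product is just $G_\lambda$ with a single fixed point $M$), and the inductive step is exactly the reduction just described, with the exact sequence for $G_\lambda \cdot G_\mu$ obtained as the direct sum of the two-term sequences furnished by $I_1$ on the top rectangle together with (the pullback of) the inductively-constructed sequence for $G_{\lambda'}\cdot G_{\mu'}$. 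Since $I_1$ is a sign-reversing involution with unique fixed point $M$ (Proposition \ref{2rect} applied layer by layer), this yields Theorem \ref{sequencethm} in case (2a).

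The main obstacle I anticipate is verifying that the reduced data $(\lambda',\mu')$ genuinely stays within case (2a) — that is, confirming that $\lambda'$ is still a one-row rectangle after the truncation, rather than acquiring a second row from interactions between $w(\lambda)$ and $w(\mu^u)$ in the combined word $w^+$. This requires a careful look at how the minimal filling $M^u$ of the rectangle $\mu^u$ interleaves with the $\lambda_1$ copies of $1$ from $\lambda$, and at exactly which rows survive the ``top $\alpha_1$ rows truncated'' operation; the ambient-box bookkeeping (which the paper explicitly defers) must also be checked to confirm $\lambda'$ still fits. Everything else is a routine consequence of the Reduction Lemma and the sign-reversing property of $I_1$.
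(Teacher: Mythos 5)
Your proposal is correct and follows essentially the same route as the paper: induct on the number of distinct part sizes of $\mu$, apply the Reduction Lemma to the upper rectangle $\mu^u=\mu_1^{\alpha_1}$, and observe that since the combined shape $\lambda\cup M^u$ has exactly $1+\alpha_1$ rows, truncating the top $\alpha_1$ rows leaves $\lambda'$ a single row of length $\tau_{1+\alpha_1}$, so the reduced product is again in case (2a). The only cosmetic difference is the base case (the paper uses $\mu$ a single rectangle with an explicit description of the matching, you use $\mu$ empty), which does not affect correctness.
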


\begin{proof} We proceed by induction. Consider the base case $G_{(\lambda_1)} \times G_{(\mu_1)}$:
the only possible fillings for $(\mu_1)$ are  $[1,\ldots, 1, 2, \ldots, 2] \text{ or } [1,\ldots, 1, [12],2, \ldots, 2]$.
The involution matches terms with 2 or $[12]$ in the $i^{th}$ spot.\\

Now, assume the claim holds for the products $G_{(\lambda_1)} \times G_{\mu} \text{ for } l(\mu)=j \; \forall \; j < N$ for some $N \in \Z^+$.
Then consider $G_{(\lambda_1)}  \times  G_{\mu} \text{ for } l(\mu)=N$. Applying the Reduction Lemma,
we get the product $G_{\mu_2,\ldots, \mu_m} \times G_{(\lambda_1')} \text{ for } \lambda_1'=\tau_{\beta+\alpha_1}$. Note that $l(\lambda_1')=1$.
\end{proof}

\begin{figure}[htbp]
\begin{center}
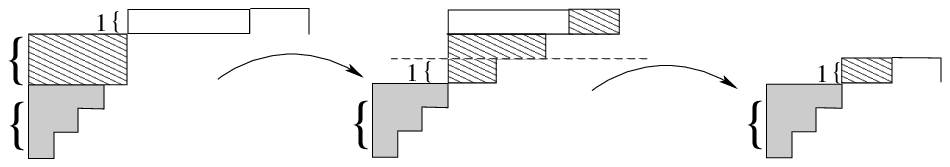
\caption{The Reduction Lemma applied to Pieri-Stembridge case (2a).}\label{2a}
\end{center}
\end{figure}

\begin{Prop}\label{2b} [Pieri-Stembridge case (2b)] Theorem \ref{sequencethm} holds in the case of a single column and an arbitrary tableau $(\lambda_1=1)$: let $\lambda=(1^{\beta})$ and $\mu=(\mu_1^{\alpha_1},\ldots,\mu_m^{\alpha_m}) $.\end{Prop}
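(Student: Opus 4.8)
The plan is to imitate the proof of Proposition \ref{2a} almost verbatim, with the roles of ``row'' and ``column'' interchanged, inducting on $l(\mu)$. For the base case I take $\mu$ to be a rectangle $(\mu_1^{\alpha_1})$; then $\lambda=(1^{\beta})$ and $\mu$ are both rectangles, so Theorem \ref{sequencethm} holds by Proposition \ref{2rect} (the involution $I_1$, with unique fixed point $M$). One could instead spell out the column-times-single-row case with an explicit toggling involution exactly as in Proposition \ref{2a}, but invoking Proposition \ref{2rect} is cleaner.

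For the inductive step, write $\mu=(\mu_1^{\alpha_1},\ldots,\mu_m^{\alpha_m})$ with $m\ge 2$, assume the result for a single column against any partition with fewer distinct part sizes, and apply the Reduction Lemma to the upper rectangle $\mu^{u}=(\mu_1^{\alpha_1})$. By Proposition \ref{2rect} all fillings cancel except those agreeing with $M$ on $\mu^{u}$, and what remains is the poset of $G_{\lambda'}\cdot G_{\mu'}$ with $\mu'=\mu|_{(2,\ldots,m)}$, which has one fewer distinct part size, and $\lambda'=(\lambda\cup M^{u})|_{(\alpha_1+1,\ldots)}$. The only thing to check, so that the induction closes, is that $\lambda'$ is again a single column. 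For this I would first establish the shape of $M^{u}$: since $w(\lambda)=(1,2,\ldots,\beta)$ builds the diagram up to the column $(1^{\beta})$, every entry of $M^{u}$ subsequently read into row $r$ must not exceed row $r-1$ of the current diagram, which forces each row of $M^{u}$ to be constant, say all $i$'s, possibly capped by a single larger entry in the last column; moreover strictness down the last column forces those cap-entries to be distinct across rows, and to lie in $\{\beta+1,\ldots,\beta+\alpha_1\}$. Granting this, $w^{+}=w(\lambda)\cup w(M^{u})$ contributes one box to each of rows $1,\ldots,\beta$ (from $\lambda$), several boxes to each of rows $1,\ldots,\alpha_1$ and at most one box to each of rows $\beta+1,\ldots,\beta+\alpha_1$ (from $M^{u}$); hence every row of the partition attached to $w^{+}$ below the $\alpha_1$-st has width at most $1$, and truncating the top $\alpha_1$ rows leaves a single column $\lambda'$ (empty when $\beta\le\alpha_1$, in which case $G_{\lambda'}\cdot G_{\mu'}=G_{\mu'}$ is already a single term). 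One then applies the induction hypothesis (or Proposition \ref{2rect} when $\mu'$ is a rectangle, or triviality when $\lambda'=\emptyset$) to $G_{\lambda'}\cdot G_{\mu'}$ and assembles the two-term exact sequences as in Proposition \ref{2a}.

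The step I expect to be the main obstacle is precisely this structural description of $M^{u}$, and in particular ruling out entries of $M^{u}$ strictly between $\alpha_1$ and $\beta$ — this is exactly what keeps the truncated shape one box wide. In the unrestricted Stembridge setting it is cleanest, since there $M$ is the superstandard filling of $\mu$ (the lexicographically extreme term of the dual Pieri rule $s_{(1^{\beta})}\cdot s_{\mu}$ adds the size-$\beta$ vertical strip into the first $\beta$ rows of $\mu$), all cap-entries are absent, and the computation above degenerates to $\lambda'=(1^{\max(\beta-\alpha_1,0)})$. When the ambient box genuinely constrains the product the cap-entries can appear, but the conclusion is unchanged; as in the other Stembridge cases I would suppress the bookkeeping of the shrinking ambient box.
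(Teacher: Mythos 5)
Your proposal is correct and follows essentially the same route as the paper: induct on $\mu$, apply the Reduction Lemma to peel off the upper rectangle $\mu^{u}$, and observe that the residual $\lambda'$ is again a single column so the induction closes (the paper records this as $\lambda'=(1^{\beta'})$ with $\beta'=\beta-\mu_1+\tau_{\beta+\alpha_1}$). The only cosmetic differences are that the paper's base case is $\mu$ a single row handled by an explicit toggle involution rather than by citing Proposition \ref{2rect}, and that you supply more justification for the single-column shape of $\lambda'$ than the paper, which simply asserts it.
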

\begin{proof}
We proceed by induction. Consider the base case $G_{(1^{\beta})}  \times G_{(\mu_1)}$:
the only possible fillings for $(\mu_1)$ are  $$[1,\ldots, 1] \text{ or }[1,\ldots, 1, 2] \text{ or } [1,\ldots, 1, [12]].$$ The involution matches the latter two cases with each other, leaving the first.\\
Assume the claim holds for the products $$G_{(1^{\beta})} \times G_{\mu} \text{ for } l(\mu)=j \; \forall \; j < N$$ for some $N \in \Z^+$.
Then consider $G_{\mu} \times G_{(1^{\beta})} \text{ for } l(\mu)=N$.
 Applying the Reduction Lemma, we get the product $G_{\mu_2,\ldots, \mu_m} \times G_{(1^{\beta'})}$ where $\beta'=\beta-\mu_1+\tau_{\beta+\alpha_1}$.
\end{proof}


\begin{figure}[htbp]
\begin{center}
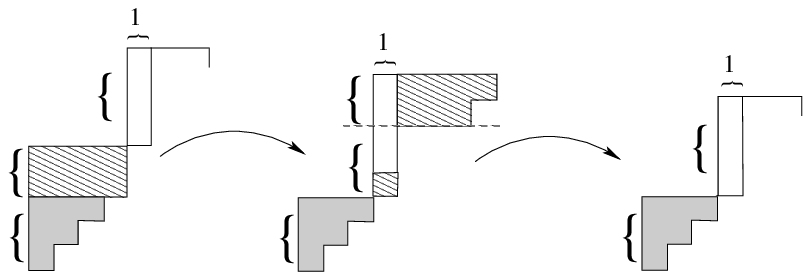
\caption{The Reduction Lemma applied to Pieri-Stembridge case (2b).}\label{2b}
\end{center}
\end{figure}

\begin{Prop}\label{3a} [Stembridge case (3a)] Theorem \ref{sequencethm} holds in the case of a rectangle and a left near rectangle $(\mu_2=1)$: let $\lambda=(\lambda_1^\beta)$ and  $\mu=(\mu_1^{\alpha_1}1^{\alpha_2})$. \end{Prop}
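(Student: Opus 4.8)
The plan is to reduce this case to a situation already handled, just as in Propositions \ref{2a} and \ref{2b}, by a single application of the Reduction Lemma (Lemma \ref{redlem}) followed by induction on the number of part sizes of $\mu$. Here $\mu = (\mu_1^{\alpha_1} 1^{\alpha_2})$ is a left near-rectangle: deleting the bottom $\alpha_2$ rows (each a single box) leaves the rectangle $(\mu_1^{\alpha_1})$. So $\mu$ has only two distinct part sizes, $\mu_1$ and $1$, and $\mu' = \mu \setminus \mu^u = (1^{\alpha_2})$ is itself a single column — a rectangle. This is the structural feature that makes the case tractable.

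First I would identify $M$, the lexicographically minimal filling of $\mu$, and its upper rectangle filling $M^u$, which is the superstandard filling of $(\mu_1^{\alpha_1})$ (rows filled with $1,2,\dots,\alpha_1$). Then I would apply $I_1$ to the upper rectangle $\mu^u$; by Proposition \ref{2rect} every term cancels except those whose upper-rectangle filling agrees with $M^u$, and the Reduction Lemma tells us the surviving poset is exactly that of a product $G_{\lambda'} \cdot G_{\mu'}$ where $\mu' = (1^{\alpha_2})$ and $\lambda' = (\lambda \cup M^u)|_{(\alpha_1+1,\dots)}$. The key observation is that $\lambda'$ is again a \emph{rectangle}: below the truncated rows, the word $w^+ = w(\lambda) \cup w(\mu^u)$ contributes a constant number of boxes to each of the remaining rows (the original $\lambda_1$ plus the contribution $\tau_{\alpha_1}$ from the last row of $M^u$), so $\lambda'$ has a single part size. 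Hence the reduced product is a rectangle times a single column — which is Stembridge case (2b), already proved in Proposition \ref{2b} (or case (1) via Proposition \ref{2rect} if one prefers to view the column as a rectangle). Composing the involution $I_1$ on $\mu^u$ with the involution from that case gives a sign-reversing involution on the full poset with the single fixed point $M$, which is Theorem \ref{sequencethm} for this case.

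I would set this up formally as induction, but since $\mu'$ has strictly fewer part sizes than $\mu$ (two drops to one), a single step suffices and there is no real recursion to unwind. The step I expect to require the most care is verifying that $\lambda'$ genuinely comes out as a rectangle and tracking the resulting ambient box: one must check that after placing $\lambda$ and the superstandard $M^u$ and reading $w^+$, the rows $\alpha_1+1, \alpha_1+2, \dots$ all have equal length, and confirm that the extra numerical constraint imposed by the Reduction Lemma (relating the number of $1$'s to the number of $(\beta+\alpha_1)$'s in $w^+$) is automatically satisfied — or is harmless — in the reduced single-column product, so that the poset really is the one for $G_{\lambda'} \cdot G_{(1^{\alpha_2})}$ without extra truncation. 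The ambient-box bookkeeping, per the remark before the section, is routine and I would suppress it. Everything else is a direct quotation of the machinery already in place.
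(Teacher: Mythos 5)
Your proposal is correct and follows exactly the paper's route: one application of the Reduction Lemma to the upper rectangle $\mu^u=(\mu_1^{\alpha_1})$, leaving $\mu'=(1^{\alpha_2})$ and landing in Pieri--Stembridge case (2b), i.e.\ Proposition \ref{2b}. One caveat: your ``key observation'' that $\lambda'$ is again a rectangle is neither needed --- Proposition \ref{2b} allows an arbitrary partner for the single column --- nor true in general, since when the ambient box forces $M^u$ away from the superstandard filling the content spills into rows below $\alpha_1$ and $\lambda'$ can acquire two or even three part sizes (compare the non-rectangular $\lambda'$ arising in cases (3c) and (4b)); dropping that claim leaves the argument intact.
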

\begin{proof}
Apply the Reduction Lemma to the upper rectangle $\mu^+=\mu_1^{\alpha_1}$. Then $\mu'=1^{\alpha_2}$ and $\lambda'=(\lambda \bigcup \mu^{u})|_{\alpha_1+1,\ldots}$. This is Pieri-Stembridge case (2b).
\end{proof}


\begin{figure}[htbp]
\begin{center}
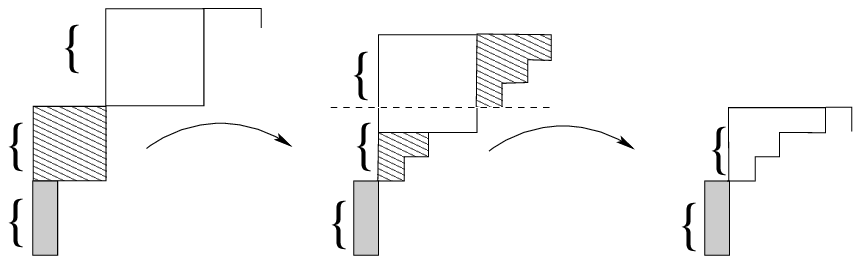
\caption{The Reduction Lemma applied to Stembridge case (3a).}\label{3a}
\end{center}
\end{figure}

\begin{Prop}\label{3b}[Stembridge case (3b)] Theorem \ref{sequencethm} holds in the case of a rectangle and a bottom near rectangle $(\alpha_2=1)$: let $\lambda=(\lambda_1^\beta)$ and $\mu=(\mu_1^{\alpha_1}\mu_2)$. \end{Prop}
\begin{proof}
Apply the Reduction Lemma to the upper rectangle $\mu^+=\mu_1^{\alpha_1}$. Then $\mu'=\mu_2$ and $\lambda'=(\lambda \bigcup \mu^{u})|_{\alpha_1+1,\ldots}$ . This is Pieri-Stembridge case (2a).
\end{proof}

\begin{figure}[htbp]
\begin{center}
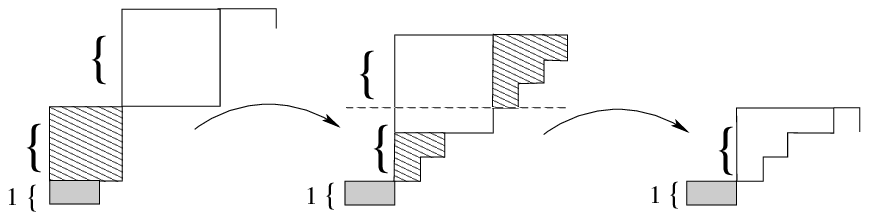
\caption{The Reduction Lemma applied to Stembridge case (3b).}\label{3b}
\end{center}
\end{figure}

\begin{Prop}\label{3c}[Stembridge case (3c)] Theorem \ref{sequencethm} holds in the case of a rectangle and a top near rectangle $(\alpha_1=1)$: let $\lambda=(\lambda_1^\beta)$ and $\mu=(\mu_1 \mu_2^{\alpha_2})$. \end{Prop}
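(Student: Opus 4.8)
The plan is to follow the template of the preceding two cases (3a) and (3b): apply the Reduction Lemma to the upper rectangle of $\mu$ and recognize the resulting product as one of the cases already settled. First I would dispose of the boundary case $\beta = 1$: then $\lambda = (\lambda_1)$ is a single row, so $G_\lambda \cdot G_\mu$ is directly Pieri--Stembridge case (2a), already proved. So assume $\beta \ge 2$.

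Since $\alpha_1 = 1$, the upper rectangle $\mu^u = \mu_1^{\alpha_1}$ is the single row $(\mu_1)$, and $(\lambda, \mu^u)$ is a pair of rectangles. By Proposition \ref{2rect}, the minimal filling $M^u$ of this row (placed southwest of the superstandard $\lambda$) has $(1,j)$-entry equal to $1$ for every $j$, i.e.\ $M^u = (1^{\mu_1})$; equivalently, after reading $w(\lambda)$ one has $m(1) = \cdots = m(\beta) = \lambda_1$, so the rightmost entry of the row is forced to be a $1$, and weak increase forces the whole row. Consequently $\lambda \cup M^u = (\lambda_1 + \mu_1,\ \lambda_1^{\beta-1})$, and deleting its top row (recall $\alpha_1 = 1$) leaves $\lambda' = (\lambda_1^{\beta-1})$, a rectangle, while $\mu' = \mu \setminus \mu^u = (\mu_2^{\alpha_2})$ is also a rectangle. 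Thus the Reduction Lemma carries the poset of $G_\lambda \cdot G_\mu$ to the poset of $G_{\lambda'} \cdot G_{\mu'}$, a product of two rectangles --- Stembridge case (1) of Theorem \ref{stembridge} --- on which Proposition \ref{2rect} supplies the sign-reversing involution $I_1$ with unique fixed point the minimal tableau. This establishes Theorem \ref{sequencethm} for the original product.

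The step I expect to require the most care is verifying that this reduction is genuinely clean, so that the sub-poset surviving the cancellation of the upper rectangle is \emph{exactly} the poset attached to $G_{(\lambda_1^{\beta-1})} \cdot G_{(\mu_2^{\alpha_2})}$. Concretely, one must check that the ``added restriction on the number of $1$'s'' in the statement of the Reduction Lemma is vacuous here: with $\alpha_1 = 1$ it bounds the number of $1$'s in a filling of $\mu'$ by the number of $1$'s occurring in $w^+ = w(\lambda) \cup w(\mu^u)$, namely $\lambda_1 + \mu_1$; but the $1$'s of a set-valued filling of the rectangle $(\mu_2^{\alpha_2})$ all lie in its first row, so there are at most $\mu_2 \le \mu_1 \le \lambda_1 + \mu_1$ of them, and the bound holds automatically. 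The ambient-box accounting (the box becomes $(k-1) \times (\lambda_1 + \mu_1)$) is routine and, as in the other cases, suppressed.
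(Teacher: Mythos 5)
Your reduction follows the paper's template, but the identification of $M^u$ --- and hence of $\lambda'$ --- is wrong in general, and the error changes which case the reduction lands in. You assert that the lexicographically minimal filling of the single row $\mu^u=(\mu_1)$ is all $1$'s, so that $\lambda'=(\lambda_1^{\beta-1})$ is again a rectangle and the reduced product is the two-rectangles case handled by Proposition \ref{2rect}. Two things go wrong. First, the claim that ``the rightmost entry of the row is forced to be a $1$'' is not true even before worrying about the box: after reading $w(\lambda)$ one has $m(\beta)=\lambda_1$, so an entry $\beta+1$ is perfectly admissible in that row (indeed the valid rows are exactly of the form $1^a(\beta+1)^b$, possibly with one set $\{1,\beta+1\}$); the $1$ is merely the lex-minimal \emph{choice} when it is available. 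Second, and decisively, the poset only contains fillings whose content fits in the ambient $k\times(n-k)$ box, and the all-$1$'s row has content with first part $\lambda_1+\mu_1$. When $\lambda_1+\mu_1>n-k$ --- which is compatible with the product being nonzero, e.g.\ $\lambda=(2,2)$, $\mu=(2,1)$ in a $4\times 3$ box --- the minimal filling of the row is forced to be $1^a(\beta+1)^{\mu_1-a}$ with $a=n-k-\lambda_1<\mu_1$. Then $\lambda\cup M^u=(\lambda_1+a,\ \lambda_1^{\beta-1},\ \mu_1-a)$, and truncating the top row leaves $\lambda'=(\lambda_1^{\beta-1},\,\mu_1-a)$, a \emph{bottom near-rectangle}, not a rectangle.

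This is precisely why the paper records the outcome of the Reduction Lemma here as $\lambda'=(\lambda_1^{\beta-1}\lambda_2)$ with $\lambda_2\le\min\{\mu_1,\lambda_1\}$ (the bound on the number of $(\beta+1)$'s coming from the reverse lattice word condition) and then sends the reduced product to a previously settled Pieri-type case, rather than to Proposition \ref{2rect}. Your subsidiary check that the ``added restriction on the number of $1$'s'' is vacuous is a reasonable thing to verify, but it does not rescue the argument: the problem is not the count of $1$'s in $\mu'$ but the shape of $\lambda'$. To repair your proof you must either add the hypothesis $\lambda_1+\mu_1\le n-k$ (in which case your conclusion is correct but covers only part of the proposition) or treat the general $\lambda'$ as the paper does.
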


\begin{proof}
Apply the Reduction Lemma to the upper rectangle $\mu^+=\mu_1$. Then $\mu'=\mu_2^{\alpha_2}$ and $\lambda'=\lambda_1^{\beta-1} \lambda_2$ for $\lambda_2 \leq min\{\mu_1,\lambda_1\}$. This is Pieri-Stembridge case (2b).
\end{proof}

%

\begin{figure}[htbp]
\begin{center}
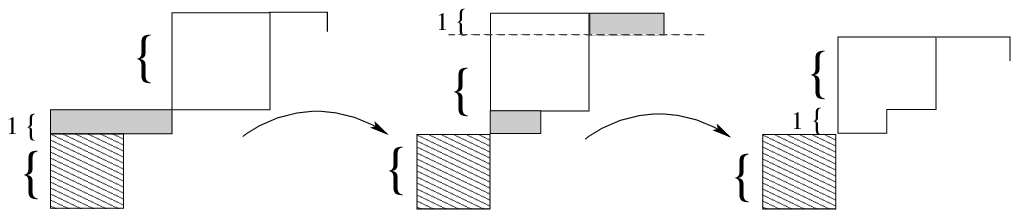
\caption{The Reduction Lemma applied to Stembridge case (3c).}\label{3c}
\end{center}
\end{figure}

\begin{Prop}\label{4a} [Stembridge case (4a)] Theorem \ref{sequencethm} holds in the case of a two row rectangle $(\beta=2)$ and a fat hook: let $\lambda=(\lambda_1^2)$ and $\mu=(\mu_1^{\alpha_1},\mu_2^{\alpha_2})$.
\end{Prop}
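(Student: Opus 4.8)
The plan is to peel the wide part off the fat hook with the Reduction Lemma and land in an already-settled case. Since $\lambda=(\lambda_1^2)$ is a rectangle, Lemma \ref{redlem} applies to $G_\lambda\cdot G_\mu$ with $\mu=(\mu_1^{\alpha_1}\mu_2^{\alpha_2})$, and the upper rectangle is $\mu^u=\mu_1^{\alpha_1}$. First I would dispose of the boundary cases: if $\alpha_1=1$ then $\mu$ is a top near-rectangle and the statement is Proposition \ref{3c}, and if $\alpha_2=1$ then $\mu$ is a bottom near-rectangle and it is Proposition \ref{3b}. So I may assume $\alpha_1,\alpha_2\ge 2$, i.e. that $\mu$ is a genuine fat hook.

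Then I would apply the Reduction Lemma to $\mu^u$. Applying $I_1$ to $\mu^u$ cancels in pairs all fillings of $\mu$ whose restriction to $\mu^u$ differs from the superstandard filling $M^u$, and the surviving sub-poset is the poset of a smaller product $G_{\lambda'}\cdot G_{\mu'}$ in a smaller ambient box, with $\mu'=\mu\setminus\mu^u=(\mu_2^{\alpha_2})$ and $\lambda'=(\lambda\cup M^u)|_{(\alpha_1+1,\ldots)}$ together with the extra multiplicity restriction recorded in Lemma \ref{redlem}. The key point I want to extract is that $\mu'$ is now a single rectangle and $\lambda'$ is again a rectangle: because $\lambda$ has only two rows while the superstandard filling $M^u$ of $\mu_1^{\alpha_1}$ occupies rows $1,\ldots,\alpha_1$ with $\alpha_1\ge 2$, the two distinct row lengths $\lambda_1+\mu_1$ and $\mu_1$ of $\lambda\cup M^u$ collapse under the truncation, leaving — depending on $\alpha_1$, and after the ambient box is taken into account — a two-row rectangle, a one-row rectangle, or the empty shape. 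Hence $G_{\lambda'}\cdot G_{\mu'}$ is a product of two rectangles, i.e. Stembridge case (1), settled by Proposition \ref{2rect} together with the inductive use of the Reduction Lemma, or it degenerates to a Pieri product covered by Proposition \ref{2a}. The desired sign-reversing involution for $G_\lambda\cdot G_\mu$, with its unique fixed point $M$, is the composite of $I_1$ on $\mu^u$ with the involution already constructed for that smaller product, so Theorem \ref{sequencethm} follows.

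The main obstacle is the bookkeeping in the middle step: determining $\lambda'$ and the width of the new ambient box exactly, and checking that the ``added restriction on the number of $1$'s versus the number of $\alpha_1$'s'' of Lemma \ref{redlem} coincides with the restriction coming from shrinking the box, so that the reduced poset is genuinely (an order ideal in) the poset of a rectangle-times-rectangle product rather than some constrained variant. I expect the sub-cases $\alpha_1=2$ and $\alpha_1\ge 3$ to need separate treatment here — in the first $M^u$ fills exactly the two rows of $\lambda$, while in the second it spills into rows $3,\ldots,\alpha_1$ — when verifying that $\lambda'$ is rectangular. Once that identification is in place the result is immediate from the earlier cases, and it is precisely the hypothesis $\beta=2$ that makes the fat-hook structure of $\lambda\cup M^u$ reduce to a single rectangle sitting over a rectangular background, matching the configuration of Proposition \ref{2rect}.
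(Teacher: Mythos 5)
Your first step is the same as the paper's: apply the Reduction Lemma to the upper rectangle $\mu^u=\mu_1^{\alpha_1}$, so that $\mu'=\mu_2^{\alpha_2}$ is a rectangle and the problem reduces to a previously settled case. But the two arguments diverge at the crucial point of identifying $\lambda'$. The paper only claims $l(\lambda')\leq 2$ (immediate, since $\lambda\cup M^u$ has at most $\beta+\alpha_1=\alpha_1+2$ rows and the top $\alpha_1$ are truncated) and then invokes cases (3b)/(3c), which handle a rectangle times an \emph{arbitrary} two-row shape --- equal rows or not. You instead insist that $\lambda'$ is a rectangle so as to land in case (1) or a Pieri case, and this is exactly where your proof has a gap. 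Your stated mechanism --- that ``the two distinct row lengths $\lambda_1+\mu_1$ and $\mu_1$ of $\lambda\cup M^u$ collapse under the truncation'' --- presumes $M^u$ is the superstandard filling, in which case the truncation leaves the \emph{empty} shape, not a two-row rectangle. The two-row outcomes you list arise precisely when the ambient box forces $M^u$ to be non-superstandard, pushing entries of $M^u$ into rows $\alpha_1+1$ and $\alpha_1+2$; whether those two rows have equal length ($\tau_{\alpha_1+1}=\tau_{\alpha_1+2}$) is a nontrivial fact about the lexicographically minimal filling (it amounts to showing no column of $M^u$ contains a $1$ without a $2$), and you explicitly defer it as ``the main obstacle'' rather than proving it.

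So the proposal as written is incomplete at its central step, and the detour is unnecessary: since Propositions \ref{3b} and \ref{3c} are already available and absorb a two-row $\lambda'$ with unequal rows at no extra cost, nothing is gained by establishing rectangularity of $\lambda'$. If you do want to pursue your route, you must actually prove $\tau_{\alpha_1+1}=\tau_{\alpha_1+2}$ for $M$ (for $\alpha_1\geq 2$), which requires an argument about how the box constraint interacts with lexicographic minimality, not just the truncation bookkeeping you describe. Your peeling off of the boundary cases $\alpha_1=1$ and $\alpha_2=1$ to Propositions \ref{3c} and \ref{3b} is correct and is in fact where the paper's ``(3b) or (3c)'' conclusion is genuinely needed, since for $\alpha_1=1$ one gets $\lambda'=(\lambda_1,\mu_1-\tau_1)$, which is generally not a rectangle.
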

\begin{proof}

Apply the Reduction Lemma to the upper rectangle $\mu^+=\mu_1^{\alpha_1}$. Then $\mu'=\mu_2^{\alpha_2}$ and $l(\lambda') \leq 2$. This is Stembridge case (3b) or (3c).
\end{proof}


\begin{figure}[htbp]
\begin{center}
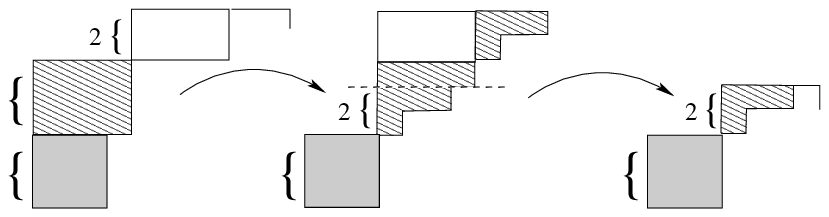
\caption{The Reduction Lemma applied to Stembridge case (4a).}\label{4a}
\end{center}
\end{figure}

\begin{Prop}\label{4b} [Stembridge case (4b)] Theorem \ref{sequencethm} holds in the case of a two column rectangle $(\lambda_1=2)$ and a fat hook : let $\lambda=(2^\beta)$ and $\mu=(\mu_1^{\alpha_1},\mu_2^{\alpha_2})$ .
\end{Prop}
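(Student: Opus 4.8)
The plan is to prove case (4b) by the same device used for (4a): a single application of the Reduction Lemma that lands the product in a Stembridge case already treated in this section. First I would invoke Lemma \ref{redlem} with $\mu^u = \mu_1^{\alpha_1}$, the wide block of the fat hook $\mu = (\mu_1^{\alpha_1},\mu_2^{\alpha_2})$. Running $I_1$ on this block leaves only the tableaux whose top $\alpha_1$ rows agree with those of the minimal filling $M$, in which row $i$ of the block is $\mu_1$ copies of $i$, and the Reduction Lemma then identifies the surviving poset with the one attached to a product $G_{\lambda'}\cdot G_{\mu'}$, where $\mu' = \mu\setminus\mu^u = \mu_2^{\alpha_2}$ is a rectangle and $\lambda'$ is read off from $w^+ = w(\lambda)\cup w(\mu^u)$.

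Next I would pin down $\lambda'$. The superstandard $\lambda=(2^{\beta})$ contributes two boxes to each of its $\beta$ rows and the block $M^u$ contributes $\mu_1$ boxes to each of rows $1,\dots,\alpha_1$, so the shape attached to $w^+$ is $((2+\mu_1)^{\alpha_1},2^{\beta-\alpha_1})$ when $\alpha_1\le\beta$ and $((2+\mu_1)^{\beta},\mu_1^{\alpha_1-\beta})$ when $\alpha_1>\beta$; deleting the top $\alpha_1$ rows leaves $\lambda' = (2^{\beta-\alpha_1})$ for $\alpha_1<\beta$ and $\lambda'=\emptyset$ for $\alpha_1\ge\beta$ — in every case a two-column rectangle, possibly empty. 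The extra ``number of $1$'s versus number of $\alpha_1$'s'' bookkeeping of the Reduction Lemma is vacuous here, since semistandardness forces every entry of a filling of $\mu'=\mu_2^{\alpha_2}$ to exceed $\alpha_1$, so no $1$'s ever appear there. If one finds instead that the Reduction Lemma leaves $\lambda'$ with one extra variable-length row, then $\lambda'$ is a two-column near-rectangle and the argument proceeds through Stembridge case (3b) rather than case (1); either way the reduced product is one already handled.

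Thus the reduced product is a product of two rectangles — Stembridge case (1) — and Theorem \ref{sequencethm} for it is exactly Proposition \ref{2rect} (when $\lambda'=\emptyset$ the reduced product is a single Grothendieck class and the claim is trivial), or at worst a two-column-near-rectangle times a rectangle, which is Proposition \ref{3b}. The step I expect to be the only real work is the shape computation of the preceding paragraph: I want to be certain that absorbing the superstandard top block of $\mu$ into $\lambda$ produces a northeast partition with exactly two column lengths and no spurious third part size, and to track how the ambient $k\times(n-k)$ box contracts, so that the citation of Proposition \ref{2rect} (or \ref{3b}) is justified. As a consistency check one may note that $(2^{\beta})$ is the transpose of the two-row rectangle and that Buch's product constants are transpose-invariant, so case (4b) is the mirror of case (4a); I would nevertheless give the direct Reduction Lemma argument, since the transpose symmetry is not visibly compatible with the explicit two-term exact sequences that Theorem \ref{sequencethm} demands.
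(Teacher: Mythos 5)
Your overall strategy is the paper's: apply the Reduction Lemma to the upper rectangle $\mu^u=\mu_1^{\alpha_1}$ and land in a previously treated case. But your computation of $\lambda'$ has a genuine gap: you assume the minimal filling $M^u$ is the superstandard one (row $i$ filled with $\mu_1$ copies of $i$), which is only true when the ambient box is wide enough, namely when $\lambda_1+\mu_1\le n-k$. In the generic Richardson situation the box constraint binds: row $i$ of the combined shape $\lambda\cup M^u$ can hold at most $n-k$ boxes, so the lexicographically first filling of $\mu^u$ is forced to use entries from the lower blocks $[\beta+1,\beta+2,\ldots]$ as well. Those entries deposit boxes in rows $\beta+1,\beta+2,\ldots$ of the combined shape, and since row $\beta$ has length $\lambda_1=2$, these new rows have length $1$ or $2$. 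Hence after truncating the top $\alpha_1$ rows one gets $\lambda'=(2^{\beta_1},1^{\beta_2})$ --- a \emph{left} near-rectangle --- paired with the rectangle $\mu'=\mu_2^{\alpha_2}$. The correct citation is therefore Stembridge case (3a) (Proposition \ref{3a}), not the two-rectangles case (1), and also not your fallback case (3b): a bottom near-rectangle has one extra row of arbitrary length, whereas here one gets possibly many extra rows each of length $1$.

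You in fact flagged ``track how the ambient $k\times(n-k)$ box contracts'' as the only real work, but then did the shape computation as if the box were unbounded; that is exactly the step that changes the answer. The transpose-symmetry remark is a reasonable sanity check but, as you note, cannot substitute for the explicit involution, and the claim that no $1$'s appear in fillings of $\mu'$ is moot since the Reduction Lemma relabels entries in the reduced product. With $\lambda'$ corrected to $(2^{\beta_1},1^{\beta_2})$ and the reduction routed through case (3a), the argument matches the paper's.
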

\begin{proof}

Apply the Reduction Lemma to the upper rectangle $\mu^+=\mu_1^{\alpha_1}$. Then $\mu'=\mu_2^{\alpha_2}$ and $\lambda'=2^{\beta_1} 1^{\beta_2}$ for some $0 \leq {\beta_1},{\beta_2} \leq \beta$. This is Stembridge case (3a).
\end{proof}


\begin{figure}[htbp]
\begin{center}
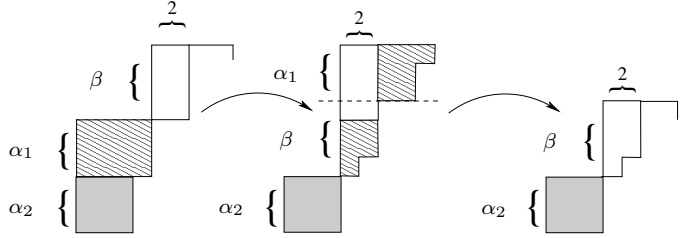
\caption{The Reduction Lemma applied to Stembridge case (4b).}\label{4b}
\end{center}
\end{figure}

\section{The Last Case: Right Near Rectangle and Rectangle}\label{last}

Finally, consider Stembridge case (3d), of a rectangle $\mu=(\mu_1^{\alpha_1})$, and a right near rectangle $\lambda =(\lambda_1^{\beta_1} (\lambda_1-1)^{\beta_2})$.

We consider this as ``almost" the rectangle times rectangle case, and we consider the fillings of the rectangle. As in the 2 rectangles case, we can characterize the fillings of the columns of the rectangle in terms of column blocks:
\begin{Lem} Consider the product of $\mu=(\mu_1^{\alpha_1})$ and $\lambda =(\lambda_1^{\beta_1} (\lambda_1-1)^{\beta_2})$ in an ambient box of size $k \times (n-k)$. A filling of a column in $\mu$ is built out of a a basic set of blocks, given by
 \begin{itemize}
 \item $[1,2,\ldots,p]^{tr}$ for $p \leq \alpha_1$.
 \item $ [\beta_1+\beta_2+1,\beta_1+\beta_2+2,...,q]^{tr}$ for $q \leq \beta_1+\beta_2+l$.
 \end{itemize}
plus one initial string of $[\beta_1+1,\beta_1+2,...,r]$ for $r \leq \beta_1+\beta_2$ which can occur of any length from 0 to $\beta_2$ running southwest through the filling, occurring in order in $w(T)$. Additionally, two overlaps are allowed in each column (either as two blocks with two entries each, or one block with 3 entries).\\
\end{Lem}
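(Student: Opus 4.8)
The plan is to follow, with an extra layer of case analysis, the proof of the corresponding lemma for two rectangles. That proof had two stages, and this one will too. First one shows that the columns of a filling of $\mu$ are \emph{independent} for the ``depends on'' relation coming from the reverse lattice word, so that it suffices to list the legal fillings of a single column. Then one lists those column fillings directly, by feeding their entries into the diagram of the reverse lattice word (the partition built by laying $\lambda$ in the northwest corner of the ambient box and appending one box to row $v$ for each value $v$ read off $w(\mu)$, which must stay a partition throughout). The one genuinely new ingredient is the step of $\lambda$: rows $\beta_1+1,\ldots,\beta_1+\beta_2$ of $\lambda$ are one box shorter than rows $1,\ldots,\beta_1$, so in that diagram there is exactly one unit of vertical slack between row $\beta_1$ and each row directly below it. This slack is what makes room for a single contiguous run $\beta_1+1,\beta_1+2,\ldots,r$ with $r\le\beta_1+\beta_2$, and it is what promotes the ``one overlap'' of the rectangle case to ``two overlaps.''

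First I would establish column independence. Suppose, for contradiction, that some value-$(a+1)$ entry lying in a column $C$ of a filling $T$ depends on an $a$ lying in a different column. Reading $w(T)$ up to and including that $a+1$, the row $a+1$ of the diagram is flush with its row $a$; then, exactly as in the two-rectangles lemma, the part of $C$ read so far cannot be an initial segment of a permitted block without either the absent $a$ being forced to the left of, or directly below, an entry already placed in $C$ (breaking semistandardness) or without creating a further foreign dependency, so infinite descent gives a contradiction. The step of $\lambda$ is irrelevant here, since it changes only which values are admissible, not the local geometry of a dependency. Hence every dependency is internal to a column, and the reverse lattice word condition decouples into a condition on each column separately.

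Second I would read off the legal single-column fillings. A column of $\mu$ has exactly $\alpha_1$ cells, the cell-sets strictly increasing down the column, and (by independence) it contributes to the diagram exactly as if processed in isolation. Value $1$ may always be appended to row $1$, and inductively $2,\ldots,p$ to rows $2,\ldots,p$, so an initial run $1,2,\ldots,p$ occupying the top $p$ cells (one per cell) is available for every $p\le\alpha_1$, giving the block $[1,2,\ldots,p]^{tr}$, which strict increase forces to the top of the column. Rows $\beta_1+\beta_2+1,\ldots$ lie below $\lambda$ and start empty, which makes the tail values $\beta_1+\beta_2+1,\ldots,q$ available (for $q$ bounded as stated) and produces the block $[\beta_1+\beta_2+1,\ldots,q]^{tr}$ at the bottom of the column. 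The one unit of slack below row $\beta_1$ lets each of rows $\beta_1+1,\ldots,\beta_1+\beta_2$ absorb a single extra box before it would overtake the row above, and since this propagates downward it appears as one contiguous run $\beta_1+1,\ldots,r$, read in order in $w(T)$, occupying consecutive cells between the other two blocks. Two adjacent blocks may share one cell, holding the largest value of the lower block together with the smallest value of the higher one --- legal precisely because reading the larger value first still finds the row above long enough --- and as there are two junctions at most two such overlaps occur; when they coincide in a single cell it carries three entries. A size count against $\alpha_1$, plus the fact that squeezing a block into fewer cells than its length, or starting a block at a value other than $1$, $\beta_1+1$, or $\beta_1+\beta_2+1$, always creates a gap of the forbidden type, then shows that this list is complete and has no redundancy.

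The step I expect to be the real obstacle is this last one: showing that \emph{exactly} two overlaps are permitted and classifying the ways they are realized. In the pure rectangle case there was a single junction and essentially no bookkeeping; here the middle run may be empty, partial, or full, and each case interacts differently with the two junctions and with the $\alpha_1$-cell budget, so one must, for instance, rule out an overlap \emph{inside} the middle run (a cell $\{\beta_1+1,\beta_1+2\}$, which fails whenever $\beta_2\ge 2$) while still allowing the cell with three entries. As in the earlier sections I would set aside the degenerate case in which $\lambda_1+\mu_1$ reaches the width of the ambient box, so that a column of $\mu$ shares columns with $\lambda$; that case is routine.
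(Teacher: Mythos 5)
Your overall route is the same as the paper's (which simply invokes ``a parallel argument to the 2-rectangles case'' for the blocks and then treats the extra string separately), and your identification of the one unit of slack below row $\beta_1$ as the source of the middle run and of the second overlap is exactly the right picture. But there is a genuine internal inconsistency in how you have structured the argument. In your first stage you claim to prove that \emph{every} dependency is internal to a column, asserting that ``the step of $\lambda$ is irrelevant here.'' That conclusion is false, and if it were true it would contradict your own second stage: the entries $\beta_1+1,\beta_1+2,\ldots,r$ of the middle run form a chain in which each value depends on the previous one, and the paper's statement (and your own phrase ``running southwest through the filling'') allows consecutive entries of that chain to sit in \emph{different} columns, the later one weakly west of the earlier. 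The reason the two-rectangles infinite descent breaks precisely here is that the descent terminates at the value $\beta_1+1$, which depends on nothing (the diagram already has $\lambda_1$ boxes in row $\beta_1$ but only $\lambda_1-1$ in row $\beta_1+1$), so a cross-column dependency chain ending at a free $\beta_1+1$ produces no contradiction. The step of $\lambda$ is not irrelevant to the independence argument --- it is the one place where that argument fails, and that failure is exactly what the extra string is.

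The fix is to weaken your stage one to the statement the descent actually proves: dependencies among values in $\{1,\ldots,\beta_1\}$ and among values in $\{\beta_1+\beta_2+1,\ldots\}$ are internal to a column (these give the two block types, gap-free as in the rectangle case), while each value in $\{\beta_1+1,\ldots,\beta_1+\beta_2\}$ occurs at most once in the whole filling (only one unit of slack per row) and these occurrences form a single chain, in order in $w(T)$, each strictly south and weakly west of the previous by semistandardness. With that restatement your second and third paragraphs, including the count of two junctions per column and the exclusion of a cell $\{\beta_1+1,\beta_1+2\}$, go through as written.
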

\begin{proof} A parallel argument to that in the 2-rectangles column characterization holds as to why the blocks occur and why there can be no other gaps besides a jump between blocks. The extra string must occur in order in the word, which means each entry must occur weakly southwest of the previous. (By semi-standardness, it must occur strictly south, weakly west.) Overlaps are allowed between the column blocks, as well as with entries of the snake.\\

\end{proof}

First we apply $I_1$, to the upper rectangles of the entries in the poset. All terms where $T^{u}$ differs from $M^{u}$ will cancel under this involution, leaving terms where $T^{u}$ is identical to $M^{u}$. Now, we define two more involutions, which when applied in succession will cancel the rest of the terms.

\begin{center}
\begin{figure}[htbp]
\epsfig{file=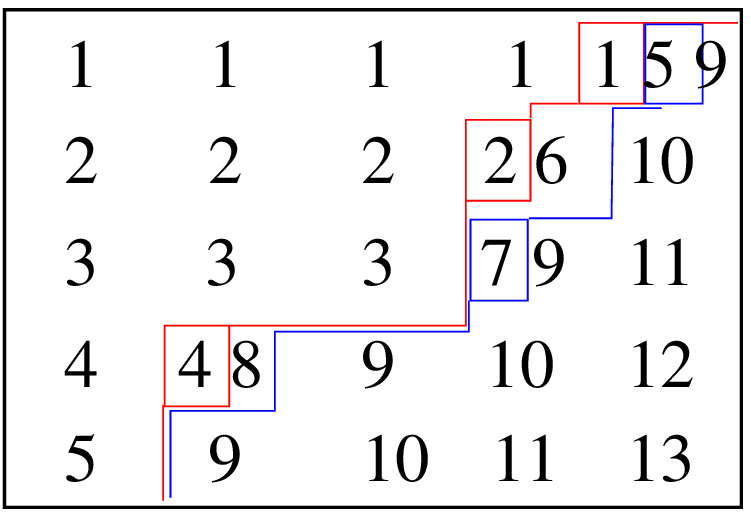, scale=0.45} \caption{The snake in a term in the product $G_{4^5}G_{5^4,4^4}$.}\label{fathooksnake}
\end{figure}
\end{center}

We call the \textbf{intersnake} those entries in the filling that correspond to the missing column piece in the fat hook; more explicitly, the entries that correspond to
\begin{itemize}
\item the first occurrence of entries $(\beta_1+1,\beta_1+2,\ldots,\beta_1+\beta_2)$
\item the $\lambda_1^{th}$ occurrence of entries $(\beta_1+\beta_2+1,\beta_1+\beta_2+2,\ldots)$
\end{itemize}
in the word of the tableau.

We extend our previous definition of a \textbf{snake} for this case to be the union of four lines: the line below all single entries $i$ occurring in box $(i,j)$, the line above all entries $\beta_1+\beta_2$, and the lines defining the intersnake region. Note that a snake uniquely defines a filling of a tableau, as the values in each region are determined.

\begin{Lem}\label{snakelem} The unique tableau in which the intersnake is maximally northeast is $M$, the first tableau in lexicographic order.  \end{Lem}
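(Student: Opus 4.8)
The plan is to show that ``maximally northeast intersnake'' forces every choice in the filling to be the lexicographically smallest one, so that the tableau coincides with $M$. First I would unpack what ``maximally northeast'' means in terms of the two pieces of the intersnake identified above: the first occurrences of the entries $\beta_1+1,\ldots,\beta_1+\beta_2$, and the $\lambda_1$-th occurrences of the entries $\beta_1+\beta_2+1,\beta_1+\beta_2+2,\ldots$. Using the column-block characterization of the preceding Lemma, each column of the rectangle $\mu$ is an upper block $[1,2,\ldots,p]^{tr}$, possibly an ``extra string'' $[\beta_1+1,\ldots,r]$, and a lower block $[\beta_1+\beta_2+1,\ldots,q]^{tr}$, with overlaps allowed. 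The intersnake entries are precisely the lowest-row parts of these blocks across all columns, so pushing the intersnake northeast means: in each row it should sit as far right as the reverse-lattice-word and semistandardness constraints permit, and equivalently each column block should be as short as possible. I would argue that there is a unique filling achieving this simultaneously in every row, and describe it explicitly.

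Next I would identify that explicit filling with $M$. Recall $M$ is the first tableau in lexicographic order with respect to the word $w(T)$ read right-to-left, top-to-bottom; by Buch's rule the northeast partition $\lambda$ is always superstandard, so the lexicographic minimum is controlled entirely by the filling of $\mu$, and minimality of the word forces each box of $\mu$ to take the smallest legal single entry with no ``extra'' set elements — i.e.\ $M$ is the semistandard Young tableau of shape $\mu$ whose entries are forced by the block structure with all blocks of minimal length and no overlaps (no extra set-valued boxes). I would then check that this is exactly the filling singled out in the previous paragraph: all column blocks minimal length and the extra string of length $0$ is the same condition as ``intersnake maximally northeast,'' because lengthening any block or inserting any overlap drags some intersnake entry strictly southwest (down a row, or left within a row). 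The converse — that $M$'s intersnake is genuinely maximal, not just locally so — follows because the ``northeast-ness'' in distinct rows is controlled independently (each column block's lowest entry sits in its own row), so the row-by-row optima can be achieved at once.

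The main obstacle I expect is making ``maximally northeast'' precise as a partial order (or a collection of comparisons) and verifying that a simultaneous maximum exists and is unique, rather than just a set of incomparable maximal elements. Concretely, the intersnake can be encoded as a lattice path or as a tuple recording, for each relevant value $v$, the column in which its designated occurrence sits; I would put the componentwise order on these tuples and show (i) every valid filling's tuple is componentwise $\le$ that of the candidate filling, using the block characterization plus semistandardness to rule out a value appearing further right than in $M$, and (ii) the candidate tuple is actually realized by a valid tableau, namely $M$. Once the order is set up correctly, the verification is a direct case check on what a single column block can look like and how overlaps or a nonempty extra string shift its terminal entry; I do not anticipate a subtlety beyond careful bookkeeping of which occurrence of each value is the ``marked'' one. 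Finally I would note this Lemma is the base point for the subsequent sign-reversing involutions, exactly as $M$ played that role in Proposition \ref{2rect}, so it suffices to pin down $M$ and leave the involution construction to what follows.
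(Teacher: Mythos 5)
Your overall architecture (make ``maximally northeast'' precise as componentwise dominance of the intersnake positions, then show $M$ realizes the dominant tuple) is a reasonable framing, and it is aimed at the same equivalence the paper establishes. But the crux is exactly the step you defer with ``I would show,'' and it is not automatic. The paper's entire proof is one explicit exchange computation: take an intersnake entry $s$ sitting in a column $[1,\ldots,p,s,\beta_1+\beta_2+1,\ldots,\beta_1+\beta_2+(\beta_1-p-1)]$ and move it to a column further left of the form $[1,\ldots,q,\beta_1+\beta_2+1,\ldots,\beta_1+\beta_2+(\beta_1-q)]$ with $q>p$. Because column lengths are fixed, the donor column must absorb one more lower-block entry and the receiving column one fewer, so the net effect on the word is to replace $\beta_1+\beta_2+(\beta_1-q)$ by the strictly larger $\beta_1+\beta_2+(\beta_1-p)$; hence the word becomes lexicographically later. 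This compensating shift in the \emph{other} entries of the two columns is the whole reason intersnake position and lexicographic order are linked, and your proposal never touches it: you assert that ``lengthening any block or inserting any overlap drags some intersnake entry strictly southwest,'' which is the converse direction of what is needed and, even granted, does not by itself compare the words of two distinct valid tableaux.

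There is also a concrete error in your description of $M$: you characterize it as the filling ``with all blocks of minimal length and no overlaps.'' Lexicographic minimality of $w(T)$ (read right to left, top to bottom) forces the \emph{smallest} legal entries first, so in $M$ the upper blocks $[1,2,\ldots,p]^{tr}$ are as \emph{long} as the reverse lattice word and ambient box permit; it is the lower blocks $[\beta_1+\beta_2+1,\ldots]^{tr}$ that are minimal. This is not merely cosmetic, since your plan hinges on identifying the explicit candidate filling and verifying it equals $M$; with the candidate mischaracterized, step (ii) of your program would fail as written. Finally, note that the intersnake alone does not determine a filling (the snake has other components), so uniqueness of the maximizer needs the fact that you are working after $I_1$, where the above-snake region is already pinned to $M^u$ — worth saying explicitly if you pursue the componentwise-dominance route.
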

\begin{proof} We will show that any weakly southwest shift of a snake box in an arbitrary tableau $T$ will yield a tableau $T'$ that is lexicographically after $T$. Consider a column in $T$: $$[1,2,\ldots,p,s,\beta_1+\beta_2+1,\ldots,\beta_1+\beta_2+(\beta_1-p-1)]$$ for $s$ in the intersnake of $T$ and $p < \beta_1$, and a column to the left $$[1,2,\ldots,q,\beta_1+\beta_2+1, \beta_1+\beta_2+2,\ldots,\beta_1+\beta_2+(\beta_1-q)]$$ for $p< q \leq \beta_1$. If we move $s$ to the left column and shift appropriately to get another tableau $T'$ where the snake box is southwest of the box in $T$, we get columns in $T'$ of the form $$[1,2,\ldots,p,\beta_1+\beta_2+1,\ldots,\beta_1+\beta_2+(\beta_1-p-1),\beta_1+\beta_2+(\beta_1-p)]$$ and $$[1,2,\ldots,q,s,\beta_1+\beta_2+1, \beta_1+\beta_2+2,\ldots,\beta_1+\beta_2+(\beta_1-q-1)]$$ respectively. We get $w(T')$ by taking $w(T)$ and replacing $[\beta_1+\beta_2+(\beta_1-q)]$ with the larger $[\beta_1+\beta_2+(\beta_1-p)]$. That is, $T'$ comes after $T$ lexicographically.
\end{proof}

We will use \textbf{$\text{SS-SVT}^{\;I_k}$} to denote those semi-standard set valued tableaux left after involution $k$.
 \begin{Def} We define the function $I_2: \text{SS-SVT}^{\;I_1} \rightarrow \text{SS-SVT}^{\;I_1}$ as follows: Compare $M$ and $T$ along the intersnake as defined by $M$, and find the first box where they differ. Let this be the $k^{th}$ position in the intersnake of $M$. Let $L$ be the length of the intersnake.
\begin{itemize}

\item If $M(i,j)$ in $T(i,j)$, remove it as follows:
\begin{itemize}
\item if $k=L$, remove $M(i,j)$.
\item if $k<L$, remove $M(i,j)$ and replace $M(i,j)+i+1$ with $M(i,j)+i$ for $i=[0,L-k-1]$.
\end{itemize}

\item If $M(i,j)$ is not in $T(i,j)$, add it as follows:
\begin{itemize}
\item if $k=L$, add $M(i,j)$.
\item if $k<L$, add $M(i,j)$ to $T(i,j)$ and replace $M(i,j)+i$ with $M(i,j)+i+1$ for $i=[0, L-k-1]$.
\end{itemize}

\end{itemize}
\end{Def}

\begin{figure}[htbp]
\begin{center}
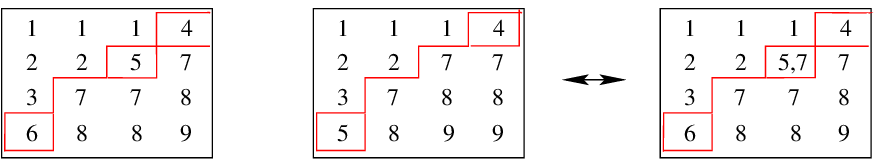
\caption{The minimal term and example of $I_2$, where $\lambda= (4,4,3,3), \mu = (4,4,4,4)$}\label{i2}
\end{center}
\end{figure}

\begin{Prop}\label{inv2} $I_2$ is a sign-reversing involution.
\end{Prop}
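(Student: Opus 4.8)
The plan is to verify the two defining properties of a sign-reversing involution separately: that $I_2$ is an involution (i.e. $I_2(I_2(T))=T$), and that it changes the sign $(-1)^{|T|-|\lambda|-|\mu|}$, which amounts to showing $|I_2(T)|=|T|\pm 1$. I would first dispose of the easy observation that the case distinction in the definition is well-posed: by Lemma \ref{snakelem}, $M$ has its intersnake maximally northeast, so the first intersnake box $(i,j)$ where $T$ differs from $M$ is well-defined for any $T\neq M$ surviving $I_1$, and along that intersnake the value $M(i,j)$ is the single canonical entry dictated by the snake of $M$; hence exactly one of ``$M(i,j)\in T(i,j)$'' or ``$M(i,j)\notin T(i,j)$'' holds. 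I would also note that $M$ itself is the unique fixed point, since $I_2$ is applied only after $I_1$ has cancelled everything whose upper rectangle differs from $M^u$, and by Proposition \ref{2rect}/the Reduction Lemma the only term with no intersnake discrepancy is $M$.

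Next, the sign/size count. In the ``$k=L$'' branch we literally add or delete the single element $M(i,j)$, so $|I_2(T)|=|T|\pm 1$ immediately. In the ``$k<L$'' branch, adding $M(i,j)$ to $T(i,j)$ increases $|T|$ by one, while the accompanying substitutions replace each entry $M(i,j)+i$ by $M(i,j)+i+1$ for $i$ in a range: these are one-for-one replacements of a value by another value and do not change the cardinality of any box, hence $|I_2(T)|=|T|+1$; symmetrically the removal branch gives $|I_2(T)|=|T|-1$. So $I_2$ is sign-reversing in all cases, provided the output is a legal SS-SVT surviving $I_1$ — which is the content of the harder part.

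The main work, and the main obstacle, is showing that $I_2(T)$ is again a semistandard set-valued tableau whose word is a reverse lattice word and whose upper rectangle still equals $M^u$, and then that $I_2$ is genuinely an involution. For legality I would argue locally along the column-block description of the previous Lemma: the intersnake consists precisely of the ``first occurrences of $\beta_1+1,\ldots,\beta_1+\beta_2$'' and the ``$\lambda_1$-th occurrences of $\beta_1+\beta_2+1,\ldots$,'' so toggling the box $(i,j)$ and cascading the substitutions $M(i,j)+i \leftrightarrow M(i,j)+i+1$ below it exactly slides the boundary between the intersnake region and the region of entries $\geq \beta_1+\beta_2+1$ by one box, which by the column-block Lemma is still a valid column filling; the other columns and the upper rectangle are untouched, and the RLW condition is preserved because (as in the proof of Lemma \ref{snakelem}) the effect on $w(T)$ is to swap a single letter for an adjacent one in a way that keeps all initial-segment multiplicities comparisons intact. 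For the involution property I would check that $I_2(T)$ still first differs from $M$ along the intersnake at the very same box $(i,j)$ (the toggled box is now either freshly agreeing or freshly disagreeing with $M$, but the cascade below does not create an earlier discrepancy), so applying $I_2$ a second time toggles the same box back and runs the inverse cascade, returning $T$. The delicate point to get exactly right is the index bookkeeping in the cascade range $i\in[0,L-k-1]$ — that the shifted entries land precisely on the intersnake positions $k{+}1,\ldots,L$ of $M$ and nowhere else — and confirming the ``two overlaps allowed per column'' clause of the column-block Lemma is neither needed nor violated by the slide; I would treat this with a small figure like Figure \ref{i2} rather than heavy notation.
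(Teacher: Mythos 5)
Your treatment of the sign-reversal (the content changes by exactly one in both the $k=L$ and $k<L$ branches, since the cascading substitutions are one-for-one relabellings) and your plan for checking legality via the column-block lemma and the persistence of the first discrepancy at $(i,j)$ are in the same spirit as the paper's proof, which is far terser: it merely notes that the content changes by one, that after $I_1$ the only overlaps remaining in a column are of the form $[s,\beta_1+\beta_2+1]$, and that the cascade is what restores the reverse lattice word condition.

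There is, however, one genuine error: your claim that $M$ is the unique fixed point of $I_2$, justified by the assertion that ``the only term with no intersnake discrepancy is $M$.'' This conflates \emph{no discrepancy} with \emph{no legal move}. As the paper observes immediately after this proposition, $I_2$ can always remove an intersnake entry but cannot always add one: when the first discrepancy requires adding $M(i,j)$ and the full intersnake already appears in $T$ (just not in $M$'s positions), there is no room to pull the intersnake through, and such $T$ is left unmatched even though $T\neq M$. These unmatched tableaux are exactly the domain of the third involution $I_3$; if your claim were correct, $I_3$ would be superfluous. The fix is to assert only that $I_2$ is a sign-reversing involution on the set of tableaux for which the indicated move is legal (pairs related by removing or adding the first discrepant intersnake entry), and to hand the remainder off to $I_3$. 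A smaller wording issue: after adding $M(i,j)$ the toggled box does not ``freshly agree'' with $M$ — it contains $M(i,j)$ together with the entry that was already there, so it still disagrees, which is precisely why the first discrepancy stays at $(i,j)$ and the second application of $I_2$ undoes the first; if the box genuinely came to agree with $M$, the involution property would fail.
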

\begin{proof} This function changes the content of a tableau $T$ by either adding or removing the last value in the intersnake of $T$. This is clearly sign-reversing, as we are changing the size of the content by one. Note that since we are applying this to terms in which the above-snake region matches $M$, now the only overlaps in the columns are of the form $[s,\beta_1+\beta_2+1]$ for $\beta_1+1 \leq s \leq \beta_1+\beta_2$. Then, $T(i,j)=[s,\beta_1+\beta_2+1]$ or $T(i,j)=[\beta_1+\beta_2+1]$. The shifting is necessary to get a tableau that satisfies the reverse lattice word condition.
\end{proof}

Note that we apply $I_2$, we can always remove an entry from the intersnake, but we cannot always add one: tableaux which are not matched by this involution are those in which
\begin{itemize}
\item $I_2$ wants to add an entry
\item we cannot pull the intersnake through, because the full intersnake already appears in $T$.
\end{itemize}

Then the only terms left in the poset are those with $T^{u}=M^{u}$, and the full intersnake appears in $T$ but does not match $M$'s intersnake. We will define one last involution on the remaining terms.

\begin{Def} We define the function $I_3: \text{SS-SVT}^{\;I_2} \rightarrow \text{SS-SVT}^{\;I_2}$ as follows: Compare $M$ and $T$ along the intersnake as defined by $M$, and find the first box where they differ. Let this be the $k^{th}$ position in the snake of $M$, and call the box in $T$, $(i,j)$, and in $M$,$(i_M,j_M)$ . In column $j$,
\begin{itemize}
\item If every entry in the column is single, let $(i',j)$=box above entry $\beta_1+\beta_2+1$ ($\i'=\beta$ if $\beta_1+\beta_2+1$ is not there). Then $T'(i',j)=[T(i',j),\beta_1+\beta_2+1]$ and replace entry $(k,j)$ with $(k,j)+1$ for $k=[i'+1,l-i']$.
\item If box $(i',j)$ has two entries, $T'(i',j)=T(i',j)\backslash \beta_1+\beta_2+1$ and replace $(k,j)$ with $(k,j)-1$ for $k=[i'+1,l-i']$.
\end{itemize}
Then $I_3$ matches $T$ with $T'$.
\end{Def}
This function changes the content of a tableau $T$ by shifting one column in the below-snake region.

\begin{figure}[htbp]
\begin{center}
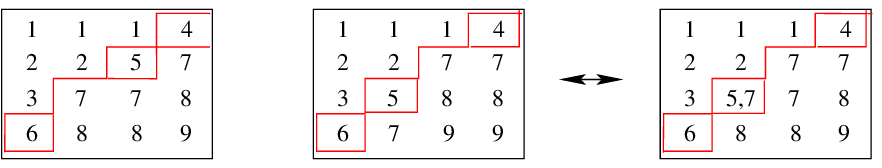
\caption{The minimal term and example of $I_3$, where $\lambda= (4,4,3,3), \mu = (4,4,4,4)$.}\label{i3}
\end{center}
\end{figure}

\begin{Prop}\label{inv3} $I_3$ is a sign-reversing involution.
\end{Prop}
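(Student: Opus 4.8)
The plan is to verify the two defining properties of a sign-reversing involution: that $I_3$ is an involution (i.e. $I_3 \circ I_3 = \mathrm{id}$ on $\text{SS-SVT}^{\;I_2}$), that it always produces a genuine element of $\text{SS-SVT}^{\;I_2}$, and that it reverses sign. The sign-reversing part is the easy observation, as in Propositions \ref{inv2} and \ref{2rect}: $I_3$ either inserts $\beta_1+\beta_2+1$ into box $(i',j)$ or deletes it, changing $|T|$ by exactly one, hence flipping $(-1)^{|T|-|\lambda|-|\mu|}$; the compensating column-shift $(k,j)\mapsto (k,j)\pm 1$ for $k\in[i'+1,l-i']$ only permutes values within the below-snake region and does not change $|T|$.

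Next I would check well-definedness and closure. The terms on which $I_3$ acts are, by the discussion preceding the definition, exactly those with $T^u = M^u$ in which the \emph{full} intersnake appears in $T$ but $T$'s intersnake does not match $M$'s; by the column characterization lemma and the analysis in the proof of Proposition \ref{inv2}, in each such column the only possible overlap is of the form $[s,\beta_1+\beta_2+1]$, so every column below the snake either has all single entries or exactly one box $(i',j)$ carrying the set $\{\cdot,\beta_1+\beta_2+1\}$ — the two bullets of the definition are therefore exhaustive and mutually exclusive, so the output box $(i',j)$ and the shift range are unambiguous. I then must confirm the output $T'$ is still semistandard, still satisfies the reverse lattice word (equivalently, the partition-diagram) condition, still has $T'^u = M^u$, and still has its full intersnake present but mismatched with $M$: the shift $(k,j)\mapsto (k,j)+1$ is precisely the adjustment that keeps the modified column a legal stack of column-blocks (it slides the tail $[\beta_1+\beta_2+1,\ldots]$ block up by one after an insertion), and since $I_3$ keys on the first intersnake discrepancy — which lies strictly above the below-snake region it modifies — neither the intersnake nor the upper rectangle is touched, so $T'$ lands back in $\text{SS-SVT}^{\;I_2}$. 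Finally, applying $I_3$ to $T'$ finds the same first discrepancy box and the same column $j$, now with the opposite parity of entries in $(i',j)$, so the inverse operation is applied and $I_3(I_3(T)) = T$; I should also note $I_3$ has no fixed points on this set, since every such $T$ genuinely differs from $M$ along the intersnake and hence triggers a strict add-or-remove.

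The main obstacle I expect is the closure/legality verification — specifically showing that after the column shift the resulting filling still decomposes into the allowed column blocks of the right-near-rectangle characterization lemma (including the constraint that at most two overlaps occur per column, and that the snake and intersnake regions remain disjoint and correctly shaped). This is where the precise bookkeeping of indices $[i'+1, l-i']$ matters, and where one must rule out the shift colliding with the intersnake entries or with the upper rectangle. I would handle this by tracking a single generic column before and after the move, exactly as in Lemma \ref{snakelem}, exhibiting the block decomposition explicitly in both cases and observing that insertion/deletion of $\beta_1+\beta_2+1$ at $(i',j)$ with the accompanying $\pm 1$ shift interchanges the two decompositions bijectively. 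Once a single column is understood, independence of columns (from the characterization lemma) propagates the conclusion to the whole tableau.
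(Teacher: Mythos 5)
Your outline covers the right checklist (sign reversal, well-definedness, closure, involutivity), and the sign-reversing and within-column bookkeeping match what the paper does. But there is a genuine gap in the step you yourself flag as the main obstacle: you propose to verify legality of the ``add'' direction by analyzing a single generic column and then letting ``independence of columns'' propagate the conclusion. Column independence is exactly what fails in the right-near-rectangle case: the characterization lemma for this case includes an initial string of entries $\beta_1+1,\ldots$ running southwest \emph{across} columns, and, more to the point, whether one may insert an extra $\beta_1+\beta_2+1$ into box $(i',j)$ is governed by the reverse lattice word condition, which counts occurrences of $\beta_1+\beta_2+1$ against occurrences of $\beta_1+\beta_2$ over an initial segment of the whole word $w(T)$ --- a cross-column condition that no single-column block decomposition can certify. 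This is precisely the case the paper isolates: removal is always legal, but insertion can fail, and the dangerous situation is when $j$ is the last column. The paper resolves it not by column independence but by a global comparison with $M$: since the surviving terms agree with $M$ outside the two relevant columns, and since Lemma \ref{snakelem} forces $(i',j)$ strictly southwest of $(i_M,j_M)$, the minimal tableau $M$ already carries $\beta_1+\beta_2+1$ in both columns, so the multiplicity count in the lattice word is not violated by the insertion. You cite Lemma \ref{snakelem}, but only for the block-decomposition bookkeeping, not for this lexicographic-minimality argument, which is the actual content of the proof.

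A secondary point worth tightening: you assert that applying $I_3$ to $T'$ ``finds the same first discrepancy box and the same column $j$.'' Since the operation inserts or deletes an entry in column $j$ and shifts the tail of that column, you should verify that the first box where $T'$ and $M$ disagree along $M$'s intersnake has not moved; the paper asserts well-definedness of the choices without elaboration, but your write-up should not take it for granted once you have perturbed the very column containing the discrepancy.
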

\begin{proof}  The choices of $i,j,$ and $j'$ are well-defined. It is clearly sign-reversing, as we are changing the size of the content by one.\\
Note that while we can always shift down (remove the last entry in) a column to get another valid filling, we cannot always shift up (add the last entry). Let us consider those cases in which we shift up. If $j<\beta$, there is a $\beta_1+\beta_2+1$ in column $i$, so we can clearly add the last term to the shifted column. However, if $j=\beta$, adding $\beta_1+\beta_2+1$ might violate the reverse lattice word condition. Since each snake box occurs strictly south of the previous, $j=\beta$ implies that $k$ is the last snake entry. Note that by our choice of box $(i',j)$, $T$ matches $M$ in columns east of column $j_M$ and in any columns in between $j$ and $j_M$, so we need only consider those two columns. By lemma \ref{snakelem}, $(i',j)$ is strictly southwest of $(i_M,j_M)$. This implies that $i_M > \beta$, so both columns $j_M$ and $j$ have $\beta_1+\beta_2+1$ in $M$, which means that the reverse lattice word condition is not violated by two occurrences and we can thus add it to $T(i',j)$.
\end{proof}

\begin{Prop} [Stembridge case (3d)] Theorem \ref{sequencethm} holds in the case of a rectangle and a right near rectangle. \end{Prop}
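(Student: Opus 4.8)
The plan is to prove case (3d) by stitching the three involutions $I_1$, $I_2$, $I_3$ together into a single sign-reversing involution $I$ on the poset of Buch Littlewood-Richardson fillings for $G_\lambda\cdot G_\mu$, and to show that $M$ is its only fixed point. Explicitly, set $I(T)=I_1(T)$ whenever $I_1(T)\neq T$; on the set $\text{SS-SVT}^{\,I_1}$ of $I_1$-fixed tableaux set $I(T)=I_2(T)$ whenever $I_2(T)\neq T$; and on the further-reduced set $\text{SS-SVT}^{\,I_2}$ set $I(T)=I_3(T)$. Because $I_2$ is only ever applied to tableaux fixed by $I_1$ and $I_3$ only to tableaux fixed by both, the three rules never conflict and $I$ is a genuine involution. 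Each stage has already been shown to be sign-reversing and to change the content by a single box — the rectangle-times-rectangle involution underlying Proposition \ref{2rect}, applied to the upper rectangle $\mu^u=\mu_1^{\alpha_1}$, for $I_1$; Proposition \ref{inv2} for $I_2$; Proposition \ref{inv3} for $I_3$ — so $I$ itself is sign-reversing and every non-trivial orbit $\{T,I(T)\}$ is a pair of tableaux whose contents differ by exactly one box.

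It then remains only to trace which tableaux survive each stage. First, $I_1$ cancels in sign-reversing pairs every filling $T$ with $T^u\neq M^u$, exactly as in the pure rectangle case, so the survivors are precisely those with $T^u=M^u$. On these, $I_2$ inserts or deletes the last intersnake entry; by the remark following Proposition \ref{inv2}, the fillings it fails to match are exactly those in which it is forced to try to insert an entry while the full intersnake already occurs in $T$, together with $M$ itself (which agrees with $M$ all along the intersnake). Hence after the first two stages the survivors are $M$ and those $T$ with $T^u=M^u$ whose columns already contain a full-length intersnake positioned differently from $M$'s. Finally $I_3$ acts on precisely this residual set: it finds the first intersnake box in which $T$ and $M$ disagree, reads off its column $j$, and either inserts or deletes the entry $\beta_1+\beta_2+1$ there, re-indexing the tail of that column so as to preserve the reverse lattice word condition. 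By Lemma \ref{snakelem} the intersnake of $M$ is maximally northeast, so for any residual $T\neq M$ the disagreeing box lies strictly southwest of its counterpart in $M$; the case analysis in the proof of Proposition \ref{inv3} then shows the column shift is always legal, leaving only $M$.

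Combining the three stages, $I$ is a sign-reversing involution on the poset of Buch Littlewood-Richardson fillings for $G_\lambda\cdot G_\mu$ whose unique fixed point is $M$ (the restriction to fillings with content inside the $k\times(n-k)$ box changes nothing, since $\mathcal P$ only loses vertices lying above the survivors). Its two-element orbits $\{T,I(T)\}$ — each a pair of tableaux whose numbers of extra entries differ by one — furnish the decomposition asserted in Theorem \ref{sequencethm} into two-term exact sequences of one-dimensional spaces, with $M$ the single leftover term. Feeding the resulting exactness into the proof of Theorem \ref{mainthm} given above shows that the Buch coefficient of each $G_\nu$ equals the M\"obius function of the corresponding vertex, which is exactly case (3d).

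The crux — and the reason case (3d) resists the Reduction Lemma and needs this three-stage argument — is the last step: checking that $I_3$ really is a well-defined involution on the residual set rather than a rule that can fall off the edge of the poset. The one place this is not automatic is when $I_3$ must insert $\beta_1+\beta_2+1$ into the rightmost relevant column $j=\beta$, where a second copy of $\beta_1+\beta_2+1$ could a priori break the reverse lattice word condition; that it never does is the content of the final paragraph of the proof of Proposition \ref{inv3}, where Lemma \ref{snakelem} forces the altered box $(i',j)$ strictly southwest of $(i_M,j_M)$, hence $i_M>\beta$, so both columns $j_M$ and $j$ already carry that entry in $M$. Once this legality and the matching involutivity verification are in hand, the three stages compose mechanically and the proof is complete.
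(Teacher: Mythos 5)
Your proposal is correct and follows essentially the same route as the paper: the paper's proof of case (3d) is precisely to apply $I_1$, $I_2$, and $I_3$ in succession and observe that only $M$ survives, which is what you do, merely packaged as a single piecewise-defined sign-reversing involution. The extra detail you supply (tracking the survivors of each stage via the remark after Proposition~\ref{inv2} and the legality check in Proposition~\ref{inv3} using Lemma~\ref{snakelem}) is a faithful elaboration of the same argument rather than a different one.
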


\begin{proof}
As shown, by applying $I_1$, $I_2$, and $I_3$ in that order, all terms are canceled except for $M$.
\end{proof}

Let us now go back to Theorem \ref{sequencethm}.
\begin{proof}[Proof of Theorem \ref{sequencethm}.]
Recall that we want to construct the exact sequence
\begin{equation}\label{eq:seq}
0 \rightarrow \C^1 \rightarrow \cdots \mapright{f_{k-1}}  \bigoplus_{T, |T|=|\nu|+ k-1, content(T) \subseteq \lambda} \C^1 \mapright{f_k} \cdots
\end{equation}
We claim that the functions are given by
$$f_i(\nu)=\left\{ \begin{array}{cc}
 \nu' &\text{ if } |\nu'| = |\nu|+1, I(\nu)=\nu' \text{ for some involution} \\
  0 &\text{ otherwise }
\end{array} \right. $$

From our involutions, we have the following two-term exact sequences, where each tableau filling $T$ is represented by $\C$. For each pair of tableaux of sizes $|\nu|+k-1$ and $|\nu|+k$ and matched by one of the above involutions, we define the functions in our sequence by:
$$f_i=\left\{ \begin{array}{cc}
  Id &\text{ if } i=k \\
  0 &\text{ otherwise }
\end{array} \right.$$
for $Id$ the identity function. That is, we obtain a sequence
$$ 0 \rightarrow  \cdots \rightarrow 0  \rightarrow \C^1 \mapright{f_k} \C^1 \rightarrow 0 \cdots \rightarrow 0$$
for each pair of tableaux. Each sequence is clearly exact. For a given $\lambda$, we can construct the desired sequence as a direct sum of sequences of this form, one corresponding to each matched pair in the order ideal of $\lambda$.

\end{proof}

\section{An Extension to the Thomas-Yong Cases}
Consider partitions $\lambda$ and $\mu$ in a $(k \times (n-k))$ box. We will review the notation introduced in \cite{TY}. We call $R=(\lambda,\mu,k \times (n-k))$ a \textbf{Richardson quadruple}, and use the notation $poset(R)$ to denote the associated poset of fillings of $\mu$. Place $\lambda$ in the upper left corner of the box, then rotate $\mu$ by $180 ^\circ$ (call this \textbf{rotate($\mu$})) and place it in the lower right corner. This quadruple $(\lambda,\mu,k\times (n-k))$ is called \textbf{basic} if $\lambda \bigcup \text{rotate}(\mu)$ does not contain any full rows or columns. If it is not basic, we can remove all full rows and columns to get a \textbf{basic demolition} $({\widetilde \lambda},{\widetilde \mu},{\widetilde k}\times {(\widetilde{n}-\widetilde{k})})$. We call each row (column) removal a \textbf{row (column) demolition}. Notice that if $\lambda \bigcap \text{rotate}(\mu) \neq \emptyset$, then $G_\lambda \cdot G_\mu = 0$.
\begin{Thm}\cite{TY} A Richardson quadruple is multiplicity-free if and only if its basic Richardson quadruple is multiplicity-free.
If a basic Richardson quadruple $(\lambda,\mu,k\times n-k)$ is multiplicity-free, then it must be in the cases classified by \cite{Stem}. \end{Thm}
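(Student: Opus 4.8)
The plan is to prove the two assertions separately. For the first (a Richardson quadruple is multiplicity-free iff its basic Richardson quadruple is), the key observation is that $poset(R)$ and $poset(\widetilde R)$ are isomorphic as labelled posets, so the Buch coefficients agree term-by-term. First I would handle a single row demolition: suppose $\lambda \cup \mathrm{rotate}(\mu)$ contains a full row inside the $k \times (n-k)$ box. Reading the word of a filling $T$ of $\mu$ as in the graphical interpretation of the reverse lattice word condition (placing boxes adjacent to the rightmost box in each row), a full row forces the corresponding row of every filling to be completely determined; more precisely, the full row either lies entirely in $\lambda$, entirely in $\mathrm{rotate}(\mu)$, or is split between them, and in each case the entries of $T$ meeting that row are forced and contribute nothing to the multiplicity count. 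Deleting that row yields the Richardson quadruple $(\lambda', \mu', (k-1)\times(n-k))$ (or the analogous column version, $(k\times(n-k-1))$) with a canonical bijection between fillings that preserves content, the reverse-lattice-word condition, and the sign $(-1)^{|T|-|\lambda|-|\mu|}$. Iterating over all full rows and columns gives $poset(R) \cong poset(\widetilde R)$, hence $R$ is multiplicity-free iff $\widetilde R$ is. I should also dispose of the degenerate case $\lambda \cap \mathrm{rotate}(\mu) \neq \emptyset$ separately, where $G_\lambda \cdot G_\mu = 0$ and the statement is vacuous (or where a demolition reduces the box past the point of containing $\lambda$).

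For the second assertion (a basic multiplicity-free quadruple must fall into Stembridge's cases), the plan is to reduce to Stembridge's classification of multiplicity-free products of Schur functions. The subtlety is that Stembridge works without an ambient box, so a product that is multiplicity-free inside a small box need not be multiplicity-free as an honest product of symmetric functions. The claim is that basicness removes exactly this slack. Concretely, I would argue: if $(\lambda,\mu,k\times(n-k))$ is basic, then the box is the minimal box containing $\lambda \cup \mathrm{rotate}(\mu)$ with no full row or column, and in that situation the Littlewood-Richardson coefficients $c_{\lambda\mu}^\nu$ for $\nu$ fitting in the box coincide with the full set of LR coefficients — there is no partition $\nu \supseteq \lambda$ with $c_{\lambda\mu}^\nu \neq 0$ that is "cut off" by the box without a full row or column being present. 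Hence box-multiplicity-freeness of the lowest-degree (Schur) part forces genuine multiplicity-freeness of $s_\lambda \cdot s_\mu$, so Theorem \ref{stembridge} applies to $(\lambda,\mu)$.

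The main obstacle I anticipate is precisely this last point: showing that for a \emph{basic} quadruple, no LR term is lost to the box constraint, i.e. that every $\nu$ with $c_{\lambda\mu}^{\nu} > 0$ already fits in the $k\times(n-k)$ box. This requires understanding which $\nu$ appear: by the Littlewood-Richardson rule $\nu$ ranges over shapes with $\lambda \subseteq \nu \subseteq \lambda + (\mu_1, \mu_1, \dots)$ and $|\nu| = |\lambda|+|\mu|$ constrained by $\mu$; one needs that the "extremal" such $\nu$ — the one with the longest first column and the one with the longest first row — still fit, and to see that if either failed, then $\lambda \cup \mathrm{rotate}(\mu)$ would have had a full column or row, contradicting basicness. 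I would make this precise by a direct accounting of row lengths and column heights: the largest $\nu_1$ achievable is $\lambda_1 + \mu_1$, and for this to exceed $n-k$ while $\lambda \cup \mathrm{rotate}(\mu)$ has no full row is impossible because $\mathrm{rotate}(\mu)$ occupies the bottom-right and its top row has length $\mu_1$ at horizontal position $n-k-\mu_1+1,\dots,n-k$, which together with $\lambda_1$ columns from the left would overlap (the $\lambda \cap \mathrm{rotate}(\mu) = \emptyset$ case) or exactly tile a row; the dual argument bounds column heights. Once this accounting is complete, the theorem follows by combining the box-to-no-box passage with Theorem \ref{stembridge} and the first assertion.
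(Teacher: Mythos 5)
The paper itself does not prove this statement; it is imported from \cite{TY}, so there is no internal proof to measure you against. Your first assertion (multiplicity-freeness is invariant under passing to the basic demolition) follows the same route as the paper's own Demolition Lemmata (Lemmata \ref{coldem}, \ref{fulltoprow}, \ref{rowdem}): a full row or column forces part of every filling, and excising it gives a content-preserving poset isomorphism. That half of your plan is sound, modulo writing out the forced-entry arguments with the care the paper gives them.

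The second assertion has a genuine gap. Your key claim --- that for a basic quadruple no partition $\nu$ with $c_{\lambda\mu}^{\nu}>0$ is cut off by the box --- is false. Take $\lambda=\mu=(3)$ in a $3\times 4$ box: $\lambda$ occupies row $1$, columns $1$ through $3$, and $\text{rotate}(\mu)$ occupies row $3$, columns $2$ through $4$, so no row or column of the box is full and the quadruple is basic; yet $s_3\cdot s_3=s_6+s_{51}+s_{42}+s_{33}$, and the terms $s_6$ and $s_{51}$ do not fit in the box. So basicness does not force the box-restricted Schur expansion to coincide with the unrestricted one, and your proposed accounting of extremal row lengths cannot close this: the extremal value $\nu_1=\lambda_1+\mu_1$ can exceed $n-k$ for a basic quadruple without creating a full row or column. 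What the theorem actually needs is the weaker but still substantial statement that if some unrestricted Littlewood--Richardson coefficient is at least $2$, then for a \emph{basic} quadruple some coefficient at least $2$ survives inside the box; this is the real content of the Thomas--Yong argument, and it does not follow from tracking the extremal shapes, since the shapes carrying multiplicity are generally not the extremal ones. As written, your proof of the second assertion does not go through.
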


For example, consider the case $((4,4,2,2,1),(4,3,2,1),5 \times 5)$. This product is not multiplicity-free, but has a basic demolition of $(1,1,2 \times 2)$, which is multiplicity-free.\\
\begin{center} \begin{figure}[htbp]
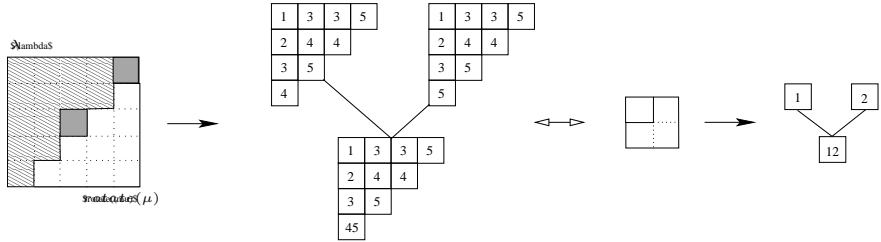 \caption{A comparison of $((4,3,2,1),(4,4,2,2,1),5 \times 5)$ and $(1,1,2 \times 2)$, where columns 1, 2, and 3 and rows 2, 4, and 5 have been removed.}
\end{figure}
\end{center}
We will show that our analysis of the \cite{Stem} multiplicity-free cases extends to this larger class of products by showing that the posets of a Richardson quadruple and its basic demolition are isomorphic. Let us define the \textbf{accessible word $w_A$} as the independent values of $\lambda$ read in increasing order, or equivalently $$w_A(j)=1+\#(\text{rows of } \lambda \text{ in column } n-k-j).$$

\begin{Lem}[Column Demolition Lemma]\label{coldem} For $R=(\lambda,\mu,k \times (n-k))$, if column $c$ is full, then $poset( \widetilde{R})$  is isomorphic to $poset(R)$. \end{Lem}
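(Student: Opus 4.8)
The plan is to exhibit an explicit content-preserving bijection between fillings appearing in $poset(R)$ and fillings appearing in $poset(\widetilde R)$, and then check that it respects the covering relation $\nu \leq_{\mathcal P} \nu'$ (i.e., reverse containment of contents). The key observation is that a full column $c$ in $\lambda \cup \operatorname{rotate}(\mu)$ forces rigid behavior in every SS-SVT of shape $\lambda \times \mu$ with reverse-lattice-word content: since column $c$ is full, the accessible word $w_A$ has a forced entry there, and by the reverse lattice word condition (recall the graphical interpretation: reading $w(T)$ must keep the running diagram a partition) the corresponding value is pinned down in $\mu$'s filling as well. Concretely, a full column $c$ at position $n-k-j$ means $w_A(j) = k+1-(\text{number of rows of }\operatorname{rotate}(\mu)\text{ in that column})$, and in any valid filling the entries of $\mu$ using the ``label'' associated to column $c$ occupy a predetermined set of boxes. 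Deleting column $c$ shifts all larger labels down by one and leaves the combinatorial structure otherwise untouched; this is exactly the map $\widetilde R \mapsto R$ at the level of words.

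First I would make precise the correspondence between a filling $T$ of $\mu$ inside the box $k\times(n-k)$ (with $\lambda$ superstandard in the northeast) and the ``diagram'' obtained by reading $w(\lambda)\cup w(T)$: this is the partition-valued trajectory already described before Figure~\ref{rectcols}. A full column $c$ corresponds to a step in this trajectory that every valid word must take at a forced moment, and the portion of $T$ contributing the relevant entries is determined. Second I would define the map $\Phi$ on fillings: given a valid $T$ for $R$, delete the forced contribution of column $c$ and relabel (decrement by one every entry strictly exceeding the forced label), producing a filling $\widetilde T$ for $\widetilde R$; define $\Phi^{-1}$ by reinserting the forced column and incrementing. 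Third I would verify $\Phi$ is well-defined both ways — that $\widetilde T$ genuinely satisfies the reverse lattice word condition for the smaller box and that $\Phi^{-1}\widetilde T$ does for the original — using the column-block characterization available in the relevant (rectangle or near-rectangle) cases, or more robustly the direct graphical argument: removing a forced step from a partition trajectory and re-indexing keeps it a partition trajectory, and conversely. Fourth I would check that $\Phi$ changes the content in a controlled way, namely $content(\Phi T)$ is $content(T)$ with the forced column label removed and higher labels decremented, so that $|{\Phi T}| = |T| - (\text{fixed constant})$ for all $T$; hence $\Phi$ preserves the ``number of extra entries'' grading and carries the order ideal structure ($\nu\leq\nu' \iff content(\nu)\supseteq content(\nu')$) isomorphically onto that of $poset(\widetilde R)$. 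Since the poset structure and the grading by extra entries are both preserved, the M\"obius functions agree vertex-by-vertex, giving the isomorphism $poset(\widetilde R)\cong poset(R)$.

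The main obstacle I anticipate is the well-definedness in the ``insertion'' direction: showing that reintroducing the full column and incrementing labels never destroys the reverse lattice word (RLW) condition. The subtlety is that an increment could in principle create a value $i+1$ with no preceding $i$ in some initial segment of the word. The resolution should be that because column $c$ is \emph{full} — so $\lambda$ plus $\operatorname{rotate}(\mu)$ together occupy all $k$ rows there — the forced label is supplied (in $\lambda$, via superstandardness, or in the forced boxes of $\mu$) exactly often enough and early enough in $w(T)$ to dominate its successor after the shift; this is the same bookkeeping that makes the column-block lemmas work, and I would phrase it in terms of the trajectory never dipping below the relevant partition shape. A secondary, more routine point is handling overlaps (set-valued boxes straddling the deleted label): one must check the forced boxes of column $c$ contain single entries, or that any overlap there is itself forced, so that deletion/insertion is unambiguous; this follows from semistandardness plus fullness of the column, as in Proposition~\ref{2rect}. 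Once these two checks are in place, the isomorphism of posets — and with it the extension of Theorems~\ref{mainthm} and~\ref{sequencethm} to the Thomas--Yong cases via iterated column and (dually) row demolitions — is immediate.
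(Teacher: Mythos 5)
Your overall architecture---an explicit bijection on fillings obtained by deleting the forced contribution of the full column, checked for well-definedness in both directions and for compatibility with the content grading---is the same as the paper's. But the concrete map you write down is not the right one: you have imported the machinery of \emph{row} demolition into the \emph{column} demolition proof. What a full column $c$ actually forces is not ``a forced entry'' of the accessible word $w_A$ (that word, and the decrement-by-one relabeling, belong to the full-row case via Lemma \ref{fulltoprow}); rather, if $\lambda$ occupies $l$ rows of column $c$, then the corresponding column of $\mu$ has $k-l$ boxes and is forced, in \emph{every} valid filling, to be the single-entry consecutive run $[l+1,l+2,\ldots,k]$. The argument is short but specific: a top entry exceeding $l+1$ pushes the bottom entry of that column past $k$, which cannot occur in a $k$-row ambient box, while a top entry below $l+1$ makes the boxes to its left in the first row unfillable; single-valuedness follows because $k-l$ strictly increasing boxes must then exactly exhaust $\{l+1,\ldots,k\}$. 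The bijection is simply: delete this forced column of $\mu$ and shift the remaining columns left, with \emph{no} relabeling---column demolition changes $n-k$ but not $k$, so the alphabet $\{1,\ldots,k\}$ is unchanged. Your proposed step ``decrement by one every entry strictly exceeding the forced label'' would corrupt the other columns of $\mu$, which also contain entries in $\{l+1,\ldots,k\}$ that must survive unchanged; in any example with $k-l\geq 2$ the decremented filling fails the reverse lattice word condition against $\widetilde{\lambda}$.

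Relatedly, you defer the forcing claim (that the boxes of the full column carry forced single entries) to a ``secondary, more routine point,'' but it is the heart of the lemma. Once the column is known to be exactly $[l+1,\ldots,k]$, both directions of the bijection are immediate: the deleted column is independent in the reverse-lattice-word sense, and since the ambient box has only $k$ rows no entry $k+1$ can depend on it, so deletion and re-insertion both preserve validity; the poset isomorphism follows because every tableau loses the same $k-l$ boxes of content. Your anticipated obstacle---that re-inserting the column and incrementing labels might destroy the reverse lattice word condition---is a symptom of having set up the wrong map: with the correct map there is no incrementing, and re-insertion of the forced column is automatic.
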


\begin{proof} We will show that the function given by removal of column $c$ from every tableau in the poset gives the isomorphism.\\
Let $l$ be the number of rows in column $c$ in $\lambda$, so $k-l$ is the number of rows in column $c$ in $\mu$. We claim that a full column implies that in every filling of $\mu$, column $c$ is filled as $[l+1,\ldots,k]$. Consider box $(1,c)$ in $\lambda$: filling it with a value greater than $l+1$ would mean that the last box in the column $(c,k-l)$ must contain an entry that is at least $k+1$, which is larger than the ambient box. Filling it with a value less than $l+1$ would lead to an invalid filling of boxes $(1,1)$ to $(1,c-1)$. Then $(1,c)$ contains the single entry $l+1$, and in order to fit in the ambient box, the proposed column filling is the only possibility.

We define a function $F:poset(R) \rightarrow poset(\widetilde{R})$ as follows: for each tableau $T$ in $poset(R)$, we remove column $c$ and shift columns $c+1$ to $\mu_1$ left by one, where entries are shifted with their associated boxes. This operation does not change semistandardness. Column $c$ is independent in the sense of the reverse-lattice word, and since the ambient box has $k$ rows, there cannot be a $k+1$ that depends on it. Thus for $T$ of shape $(\mu_1,\ldots, \mu_m)$ we get a valid tableau filling $T'$ of shape $(\mu_1-1,\ldots,\mu_{k-l}-1,\mu_{k-l+1},\ldots,\mu_{m})$ in $poset(\widetilde{R})$ . The inverse function $F^{-1}:\widetilde{R} \rightarrow R$ shifts columns $c$ to $mu_1-1$ to the right by one and adds the column back in. This is clearly an isomorphism.

\end{proof}


Let $\lambda=(\lambda_1,\ldots,\lambda_l)$ and $\mu=(\mu_1,\ldots,\mu_m)$. There is a full row in the diagram if and only if $\lambda_r + \mu_{k-r+1} = n-k$.

\begin{Lem}\label{fulltoprow} For $R=(\lambda,\mu,k \times (n-k))$, if $\lambda_1=n-k$, then $poset(R)$ is isomorphic to $poset(\widetilde{R})$ for $\widetilde{R}=(\lambda|_{(2,\ldots,l)},\mu,(k-1) \times (n-k))$.
\end{Lem}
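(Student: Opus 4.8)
The plan is to mimic the proof of the Column Demolition Lemma (Lemma \ref{coldem}), constructing an explicit bijection between fillings and checking that it is a poset isomorphism. First I would observe that a full top row means $\lambda_1 = n-k$, so the box $(k-r+1, \ldots)$ in $\mu$ from the bottom contributes nothing to the first column of the ambient box: in the ambient box diagram, row $1$ of $\lambda$ fills the entire top row, hence row $1$ of the box has no cells belonging to $\mu$ (equivalently, the bottom row of rotate($\mu$) is empty in that position). So removing the top row of $\lambda$ and shrinking the ambient box to $(k-1)\times(n-k)$ does not touch $\mu$ as a shape at all; what changes is only the set of entries permitted in the fillings of $\mu$.

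The key point to verify is that the allowed fillings of $\mu$ are in natural bijection. In $R$, the northeast tableau $\lambda$ is forced to be superstandard by the reverse lattice word condition (as noted in the paragraph before Lemma 1), so its first row contributes $\lambda_1 = n-k$ copies of the letter $1$ to the initial segment of $w(\lambda)$. Since $\lambda_1 = n-k$ is the full width, every letter $2$ appearing anywhere (in $\lambda$'s second row or in $\mu$) is "supported" by these $1$'s with room to spare; more precisely, for any tableau $T$ of shape $\lambda\times\mu$ valid for $R$, subtracting $1$ from every entry of $\mu$ (i.e. replacing the letter $a$ by $a-1$ throughout $\mu$, and simultaneously deleting the top row of $\lambda$) yields a filling valid for $\widetilde R = (\lambda|_{(2,\ldots,l)}, \mu, (k-1)\times(n-k))$, and this is reversible. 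I would check three things: (i) semistandardness of $\mu$ is preserved since we shift all entries uniformly; (ii) the new entries lie in $\{1,\ldots,k-1\}$, using that the old entries of $\mu$ lay in $\{2,\ldots,k\}$ — and they did lie in $\{2,\ldots,k\}$ because no entry of $\mu$ can equal $1$ once the top row of $\lambda$ already fills the first row of the box, and no entry can exceed $k$ by the ambient box constraint; (iii) the reverse lattice word condition transfers, because deleting the top row of $\lambda$ removes exactly $n-k$ leading $1$'s from $w$ and subtracting $1$ from all of $\mu$'s letters turns the remaining $2$'s (from row $2$ of $\lambda$) into the new $1$'s, so the multiplicity inequalities $\mathrm{mult}(i)\ge\mathrm{mult}(i+1)$ for the new word are literally the old inequalities $\mathrm{mult}(i+1)\ge\mathrm{mult}(i+2)$, which held, except the inequality $\mathrm{mult}(1)\ge\mathrm{mult}(2)$ which is automatic once the full first row is stripped — here I would use that $\lambda_1 = n-k \ge (\text{number of }2\text{'s in }\lambda\times\mu)$, which is where the full-row hypothesis is essential.

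Having established the bijection $F\colon \mathrm{poset}(R)\to\mathrm{poset}(\widetilde R)$, I would note that it obviously preserves content up to the uniform shift — the content $\nu$ of $T$ for $R$ corresponds to the content $\nu - (1^{(\text{stuff})})$ of $F(T)$ in the sense used in Theorem \ref{mainthm}, but the relevant combinatorial data, namely which letters are "extra" and the containment order on contents, is preserved exactly. Therefore $\nu \leq_{\mathcal P} \nu'$ in $\mathrm{poset}(R)$ iff the shifted contents satisfy the same relation in $\mathrm{poset}(\widetilde R)$, so $F$ is an isomorphism of posets. Symmetry under transpose would then also give the analogous statement for a full left column reducing the column count, but that is already the content of Lemma \ref{coldem}, so it is not needed here.

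The main obstacle I anticipate is bookkeeping in step (iii): making the relabeling-of-letters argument airtight, particularly confirming that the reverse-lattice-word inequalities for the truncated word are precisely the shifted old ones and that the hypothesis $\lambda_1 = n-k$ is exactly what licenses dropping the lowest inequality. A secondary subtlety is confirming that no entry of $\mu$ could have been a $1$ in the first place — this follows because placing a $1$ in $\mu$ would require a cell of the box in row $1$ below $\lambda$'s first row, which is impossible when that row is full — but it must be stated cleanly so the bijection is well-defined in both directions.
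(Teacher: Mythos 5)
Your proposal is correct and follows essentially the same route as the paper: observe that the full top row forces $\mu$ to contain no $1$'s, then shift every entry of $\mu$ down by one (equivalently, drop the top row of the superstandard $\lambda$) to land in the smaller ambient box. The paper states this in three sentences and calls the isomorphism ``clear''; your extra verifications of semistandardness, the entry bounds, and the transfer of the reverse lattice word inequalities are exactly the details it leaves implicit.
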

\begin{proof} First we observe that $\lambda_1=n-k$ implies that $\mu$ cannot have any 1's in any filling. Then we define the function $F: R \rightarrow \widetilde{R}$ as the function that takes each entry $i$ to entry $i-1$, with obvious inverse $F^{-1}$. $F$ is clearly an isomorphism.
\end{proof}

\begin{Lem}[Row Demolition Lemma]\label{rowdem} For $R=(\lambda,\mu,k \times (n-k))$, if row $r$ is full, then the $poset(\widetilde{R})$ is isomorphic to $poset(R)$. \end{Lem}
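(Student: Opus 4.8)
The plan is to adapt the argument of the Column Demolition Lemma (Lemma \ref{coldem}), transposing it from columns to rows but with the crucial difference that removing a full row changes the structure of both $\lambda$ and $\mu$, rather than just $\mu$. First I would reduce to a normalized situation: by Lemma \ref{fulltoprow} and its obvious left-right/top-bottom analogues, I may assume the full row $r$ is not the top row of $\lambda$ (otherwise we are done by that lemma); and if the full row lies entirely inside $\mu$, I transpose the roles to reduce to a full row inside $\lambda$ together with a matching empty-contribution from $\mu$. So the generic case to handle is: row $r$ satisfies $\lambda_r + \mu_{k-r+1} = n-k$ with $1 < r \le l$ and $k-r+1 \le m$.

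The key steps are then, in order: (1) Show that a full row forces a rigid local structure on every filling $T$ of $\mu$, exactly as in Lemma \ref{coldem}: the entries of $\mu$ sitting in row $r$ of the box (i.e. in row $k-r+1$ of the $\mu$-diagram when $\mu$ is placed in the lower right) must all be single and equal to $r$, because pushing any larger value into that position would require a value exceeding $k$ below it, and any smaller value would violate semistandardness against the full $\lambda_r$-block above. Simultaneously, the reverse-lattice-word condition forces the portion of $w(T)$ contributing row $r$ to be ``transparent'': it consists of $\lambda_r$-many copies of the value $r$ that depend only on row $r-1$ and are depended on only by row $r+1$, so they can be spliced out cleanly. (2) Define $F : poset(R) \to poset(\widetilde R)$ by deleting row $k-r+1$ from the $\mu$-diagram of each $T$, shifting the rows below it up by one, and relabeling every entry $j > r$ to $j-1$ (entries $\le r$ in the surviving rows are unchanged, and by step (1) no entry equal to $r$ survives outside the deleted row). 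One checks that semistandardness is preserved (the deleted row was a constant row $r$, so the rows above and below it telescope correctly after the relabel) and that the reverse-lattice-word condition is preserved, using that the deleted run of $r$'s was exactly the ``dependency bridge'' between multiplicities of $r-1$ and $r+1$. (3) Exhibit the inverse $F^{-1}$: reinsert a constant row of $r$'s of the correct length in position $k-r+1$ of $\mu$, relabel $j \ge r$ upward by one, and shift the lower rows back down; the forced-structure argument of step (1) guarantees this lands in $poset(R)$ and that $F^{-1} \circ F = \mathrm{id}$, $F \circ F^{-1} = \mathrm{id}$. (4) Conclude that $F$ is a poset isomorphism, since content containment is preserved: $content(T) \supseteq content(T')$ iff $content(F(T)) \supseteq content(F(T'))$, because $F$ acts on contents by deleting a fixed multiset of $r$'s and reindexing.

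The main obstacle I anticipate is step (1)–(2) in the case where the full row $r$ is an interior row of $\lambda$ with $\lambda_{r-1} > \lambda_r$ or $\lambda_{r+1} < \lambda_r$, so that the splice point is not simply at a rectangle boundary: there the interaction between the snake/intersnake-type dependencies (as in the earlier column-block lemmas) and the forced constant row must be checked carefully to be sure the reverse-lattice-word condition genuinely decouples across the deleted row. Concretely, one must verify that no entry $r+1$ below the deleted row was depending on an $r$ that lay strictly \emph{outside} the deleted row; this follows because the full row makes every column through row $r$ have its box filled by the single value $r$ (by the step-(1) argument applied column-by-column), so all $r$'s available as dependency anchors are precisely the deleted ones. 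Once this decoupling is established, the bijection and its inverse are routine bookkeeping of shifts and relabels, entirely parallel to Lemma \ref{coldem}.
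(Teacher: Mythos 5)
Your step (1) --- the claim that fullness of row $r$ forces every box of $\mu$ lying in row $k-r+1$ of the $\mu$-diagram to carry the single entry $r$ --- is false, and the bijection you build on it in step (2) breaks. Take the paper's own example $R=((2,2),(2,1),3\times 3)$, where row $r=2$ is full since $\lambda_2+\mu_2=2+1=3$. The filling of $\mu=(2,1)$ with first row $(\{1\},\{3\})$ and second row $(\{3\})$ is a valid Buch tableau: it is semistandard, its word $1,1,2,2,3,1,3$ is a reverse lattice word, and its content gives $\nu=(3,2,2)\subseteq 3\times 3$. Here the box in row $k-r+1=2$ contains $3$, not $2$ (and in the tableau for $\nu=(3,3,2)$ it contains $\{2,3\}$, which is not even a singleton). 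Your justification conflates the rotated copy of $\mu$ in the lower-right corner of the ambient box --- which is used only to \emph{detect} fullness --- with the actual skew shape $\lambda\times\mu$ being filled, in which no box of $\lambda$ sits above any column of $\mu$, so there is no ``full $\lambda_r$-block above'' to force anything, and the box in question need not have anything below it either. Consequently your map is not injective: the tableaux for $\nu=(3,3,1)$ (second row $\{2\}$) and for $\nu=(3,2,2)$ (second row $\{3\}$) both become $(\{1\},\{2\})$ after deleting row $2$ and relabelling, while the element $(\{2\},\{2\})$ of $poset(\widetilde R)$ is never hit.

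The rigid part of the filling sits at the opposite end of $\mu$. Fullness of row $r$ forces the boxes $(1,j)$ for $j=1,\dots,\mu_{k-r+1}$ in the \emph{top} row of $\mu$ to carry prescribed single entries, namely the accessible values $w_A(j)$ determined by $\lambda$ and the box: an entry exceeding these in box $(1,\mu_{k-r+1})$ would propagate down its column, which has height at least $k-r+1$, to an entry exceeding $k$. The demolition then deletes this forced top-left block, shifts the affected columns up by one box (so the part $\mu_{k-r+1}$ disappears from the shape, which is the one piece of your picture that is correct), absorbs the deleted entries into $\lambda$, and finishes with Lemma \ref{fulltoprow} to drop the ambient box by one row. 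The semistandardness and lattice-word verifications therefore live along the vertical cut between columns $\mu_{k-r+1}$ and $\mu_{k-r+1}+1$, not across a deleted horizontal strip, so the ``decoupling'' you worry about in your final paragraph is not the right obstacle; the argument needs to be rebuilt from the correct forced structure rather than patched.
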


\begin{proof} We will show that the function given by removal of the first $k-r+1$ boxes in row 1 of $\mu$, followed by a shifting up of the first $k-r+1$ columns, gives the isomorphism.

\begin{figure}[htbp]
\begin{center}
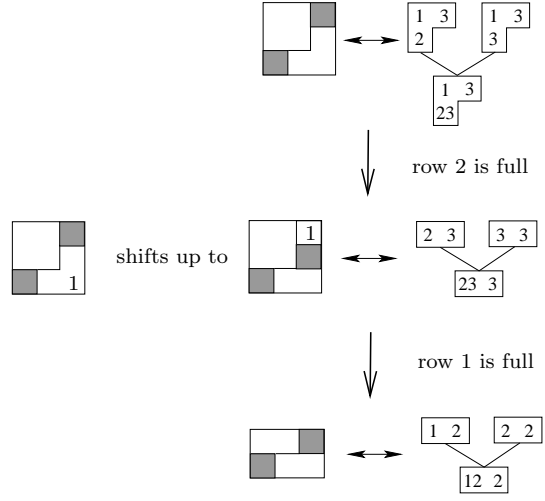
\caption{The demolition of row 2 in $((2,2),(2,1),3 \times 3)$ to $((2),(2),2 \times 3)$.}\label{rowdemo}
\end{center}
\end{figure}

Let $w$ be the number of columns in row $r$ in $\lambda$, so $n-k-w$ is the number of columns in row $r$ in $\mu$. First we claim that the entry in $(1,\mu_{k-r+1})$ is weakly less than $r$. There are only $k-r+1$ accessible entries that satisfy this condition, so these must be the filling of this block in every $\mu$, for $j$ from 1 to $\mu_m$ (i.e. box $(1,j)$ contains the single entry $w_A(j)$). Then we want to remove this extraneous information from the fillings of $\mu$ as follows: take out boxes $(1,j)$ for $j$ from 1 to $k-r+1$, then shift up columns 1 to $k-r+1$ by one box. This will give a new tableau of shape $\mu'=\mu \backslash \mu_{k-r+1}$. We instead record this information in $\lambda'=\lambda \bigcup w_A(1,\cdots,\mu_m)$. That is, $(\lambda,\mu,k \times (n-k))$ is now the product $(\lambda',\mu',k \times (n-k))$.

First we show that semistandardness is preserved. The only place problems may occur are along the vertical line between columns $\mu_{k-r+1}$ and $\mu_{k-r+1}+1$. We start at the top of the column. Consider box $(1, \mu_{k-r+1})$ with entry $a \leq r$.  Since $w_A(k-r+2) \geq r+1$, box $(1, \mu_{k-r+1}+1)$ must be filled with entries $\geq r+1$. Next we consider box $(2, \mu_{k-r+1})$: we claim that the entries are weakly less than $r+1$. If it contained entry $r+2$, then by semistandardness, the last box in the column $(k-r+1, \mu_{k-r+1})$ would have to have entries that were at least $k+1$, which would exceed the ambient box. When we shift box $(2, \mu_{k-r+1})$ to $(1, \mu_{k-r+1})$, semistandardness in row 1 is preserved. Next, assume for contradiction that the shifting violates semistandardess in a block
$\left[\begin{array}{cc}        a&b\\        c&d\end{array}\right]$;
that is, that $c>b$. But, semistandardness stipulates that $d \geq c$ and $b>d$, which together give the contradiction. Then semistandardness will be preserved throughout the column.

Next, we show that the reverse lattice condition still holds after the shift. Assume that entry $(i,j)$ depends on entry $(i',j')$. The only place where the shifting will affect the order is if $i'=i+1$, $j<k-r+1$ and $j'>k-r+1$. However, if this condition existed, semistandardness would be violated by shifting, which we have already shown cannot happen. Thus, this shifting gives a valid tableau filling.

Finally, we consider how this shifting affects the partial order. Since we are removing an identical block from every tableau, the relative content of any pair of tableaux is the same, and thus the relations in the poset are unchanged. By construction, $\lambda'_1=k$, so we can apply Lemma \ref{fulltoprow} to reduce the ambient box. Each step of the function is easily reversed, so it is an isomorphism.
\end{proof}

We note that the Row and Column Demolition Lemmata define commutative operations on the poset of a Richardson quadruple. Figure \ref{tydemo} is an example of a case with both a full row and column. (This product is Stembridge multiplicity-free in any ambient box, but is a good example of the row and column demolition commutativity.)

\begin{figure}[htbp]
\begin{center}
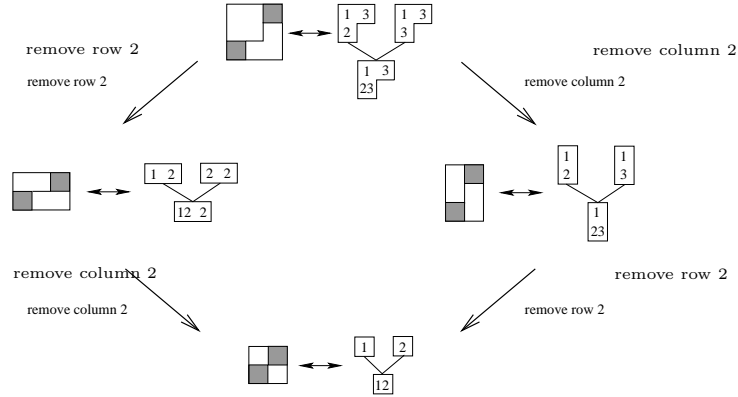
\caption{Two demolition paths of $((2,2),(2,1),3 \times 3)$ to $((1),(1),2 \times 2)$.}\label{tydemo}
\end{center}
\end{figure}

\begin{Prop} Theorem \ref{sequencethm} holds for any Richardson quadruple whose basic demolition is a Stembridge case. \end{Prop}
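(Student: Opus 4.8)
The plan is to reduce the statement to the Stembridge cases already treated, by transporting their sign-reversing involutions along the poset isomorphisms furnished by the Demolition Lemmata. Let $R=(\lambda,\mu,k\times(n-k))$ be a Richardson quadruple whose basic demolition is $\widetilde R$. If $\lambda\cap\text{rotate}(\mu)\neq\emptyset$ then $G_\lambda\cdot G_\mu=0$ and there is nothing to prove, so assume otherwise. Then $\widetilde R$ is obtained from $R$ by finitely many row and column demolitions, performed in any order since (as noted after Lemma \ref{rowdem}) these operations commute on the poset. Composing the isomorphisms supplied by Lemma \ref{coldem} and Lemma \ref{rowdem}, one demolition at a time, produces an explicit isomorphism of posets $F\colon poset(R)\longrightarrow poset(\widetilde R)$.

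The essential additional point is that $F$ is \emph{grading-preserving}: it sends a filling of $\mu$ with exactly $d$ extra entries to a filling of $\widetilde\mu$ with exactly $d$ extra entries, hence it matches the rows of the two posets. This follows directly from the proofs of Lemmas \ref{coldem} and \ref{rowdem}: a full column forces the corresponding column of $\mu$ to be filled in the unique way $[l+1,\dots,k]^{tr}$ by \emph{single} entries, and a full row forces the top $k-r+1$ boxes of each affected column to carry the single entry $w_A(j)$; in either case the boxes that are deleted (or absorbed into $\lambda'$) carry no extra entries, so $|T|-|\mu|$ is unchanged. Combined with the observation, already made in those proofs, that the relative content of any two fillings is preserved, this shows $F$ is an isomorphism of \emph{graded} posets, compatible with the ambient-box truncation on both sides.

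Now invoke the hypothesis: $\widetilde R$ lies in one of the Stembridge cases of Theorem \ref{stembridge}, so one of Propositions \ref{2rect}, \ref{2a}, \ref{2b}, \ref{3a}, \ref{3b}, \ref{3c}, \ref{4a}, \ref{4b}, together with the argument of Section \ref{last} for case (3d), provides on $poset(\widetilde R)$ a system of sign-reversing involutions pairing every term with a term one row away, leaving as sole fixed point the lexicographically minimal filling $M_{\widetilde R}$. Transporting these involutions through $F^{-1}$ --- declare $T,T''$ matched in $poset(R)$ exactly when $F(T),F(T'')$ are matched in $poset(\widetilde R)$ --- yields sign-reversing involutions on $poset(R)$ whose only fixed point is $F^{-1}(M_{\widetilde R})$, which is the lexicographically minimal filling of $\mu$ (each demolition strips the same forced, lex-least block from every tableau, so it commutes with taking the lex-minimum). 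Exactly as in the proof of Theorem \ref{sequencethm}, each matched pair gives a two-term exact sequence $0\to\C\to\C\to 0$ placed in the two homological degrees determined by the two rows it joins, and the direct sum of all of these together with the $\C$ attached to the fixed point is the required exact sequence for $R$. Therefore Theorem \ref{sequencethm} holds for $R$.

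The only place that needs care is the grading-preservation claim and the bookkeeping that a composite of several demolitions remains an isomorphism respecting the ambient box; no new combinatorial idea is needed beyond the careful iteration of Lemmas \ref{coldem} and \ref{rowdem}, since all of the real content is contained there.
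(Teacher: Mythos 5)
Your proposal is correct and follows the same route as the paper, which simply cites the Row and Column Demolition Lemmata; you have usefully made explicit the points the paper leaves implicit, namely that the composite demolition isomorphism preserves the grading by number of extra entries (since the deleted boxes carry only forced single entries) and hence transports the sign-reversing involutions and their unique fixed point.
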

\begin{proof}
This follows clearly from the previous two lemmata.
\end{proof}

\section{Acknowledgements}
I would like to extend special thanks to Allen Knutson for both the statement of the question and continuing guidance throughout the process.

\bibliographystyle{alpha}

\end{document}